\theoremstyle{plain}
\newtheorem{theorem}{Theorem}[section]
\newtheorem{lemma}[theorem]{Lemma}
\newtheorem{proposition}[theorem]{Proposition}
\newtheorem{corollary}[theorem]{Corollary}
\newtheorem{problem}[theorem]{Problem}
\newtheorem{Bounded Diameter Lemma}[theorem]{Bounded Diameter Lemma}
\theoremstyle{definition}
\newtheorem{definition}[theorem]{Definition}
\newtheorem{remark}[theorem]{Remark}
\newcommand{\Hmm}[1]{\leavevmode{\marginpar{\tiny%
$\hbox to 0mm{\hspace*{-0.5mm}$\leftarrow$\hss}%
\vcenter{\vrule depth 0.1mm height 0.1mm width \the\marginparwidth}%
\hbox to
0mm{\hss$\rightarrow$\hspace*{-0.5mm}}$\\\relax\raggedright #1}}}
\DeclareFixedFont{\Acknowledgment}{OT1}{cmr}{bx}{n}{14pt}
\newtheorem{df}{Definition}[section]
\newtheorem{conj}[df]{Conjecture}
\newcommand{\pf}{\textit{Proof.} }
\newcommand{\ti}{\tilde}
\newcommand{\pr}{\prime}
\newcommand{\ep}{\epsilon}
\newcommand{\del}{\delta}
\newcommand{\mc}{\mathcal}
\newcommand{\p}{\partial}
\newcommand{\eq}{eqnarray*}
\begin{document}
\title{Rigidity of the hexagonal Delaunay triangulated plane}
\author{Song Dai, Huabin Ge, Shiguang Ma}
\date{}

\maketitle

\begin{abstract}
We show the rigidity of the hexagonal Delaunay triangulated plane under Luo's PL conformality.
As a consequence, we obtain a rigidity theorem for a particular type of locally finite convex ideal hyperbolic polyhedra. 

\end{abstract}



\section{Introduction}\label{intro}
Let $\Sigma$ be a surface without boundary, and $T=(V,E,F)$ be a triangulation on $\Sigma$, where $V$ is the set of vertices, $E$ is the set of edges and $F$ is the set of faces. For a function $f$ on $V$, $i\in V$, we sometimes write $f_i$ instead of $f(i)$. We also use this notation for the function on $E,F$. A piecewise linear metric (PL  metric for short) is a function $l: E\rightarrow \mathbb{R}_{>0}$, such that for each $ijk\in F$, $ijk$ forms a Euclidean triangle. If we require the lengths of the edges are positive but the triangle inequality can be equality, we call $l$ a generalized triangle. Given a generalized PL metric $l$, it induces an intrinsic distance and a \emph{flat cone metric} on the triangulation $T$ in the natural manner. For each vertex $i\in V$, it is a cone point with singularity expressed as the discrete Gaussian curvature $K_i$, defined by
\begin{equation*}
K_i=2\pi-\sum\limits_{kij\in F}\theta_{\angle kij},
\end{equation*}
where $\theta_{\angle kij}$ is the angle of $\angle kij$. A generalized PL metric $l$ is called \emph{flat}, if there is no singularity, that is, $K_i=0$ at each vertex $i\in V$. In \cite{Luo}, Luo introduced the notion of the PL conformality.
\begin{definition}[\cite{Luo}]\label{plc}
Let $l,\ti{l}$ be two generalized PL metrics on $(\Sigma,T)$. We call $l$ and $\ti{l}$ are PL conformal if
\begin{equation*}
\ti{l}_{ij}=e^{u_i+u_j}l_{ij},\quad \forall ij\in E
\end{equation*}
for some function $u: V\rightarrow \mathbb{R}$. Denote $\ti{l}=u*l$.
\end{definition}
The function $u$ is called a PL conformal factor, which plays an analogous role as in the smooth case. Motivated by the prescribed curvature problem in the smooth case, Luo proposed the combinatorial version of the prescribed curvature problem in the settings above. (If the prescribed curvature is constant, it is the Yamabe problem.)

\begin{problem}[\cite{Luo}]
\label{question-luo}
Let $\Sigma$ be a surface (without boundary) with a triangulation $T$. Given a PL metric $l_0$ and a prescribed curvature $K:V\rightarrow \mathbb{R}$, is there a PL metric $l$ that PL conformal to $l_0$ and has discrete Gaussian curvature $K$? Is it unique if it exists? (rigidity)
\end{problem}

In case $\Sigma$ is compact, Problem \ref{question-luo} was perfectly resolved. For the rigidity part, Luo \cite{Luo} first proved a local version and conjectured that the global rigidity still holds. Using a variational principle and an extension technique, Bobenko, Pinkall and Springborn \cite{BPS} affirmatively answered Luo's global rigidity conjecture (see \cite{GeJiang-CVPDE} for further development). They further equipped the triangulated PL surface $(\Sigma, T, l)$ with a canonical hyperbolic metric with cusps, and observed that two PL metrics (with the same triangulation) are PL conformal if and only if the corresponding hyperbolic metrics are isometric. Due to this observation, they generalized Definition \ref{plc} to PL metrics that may not be combinatorially equivalent (see Definition 5.1.4 in \cite{BPS}, and Definition 1.1 in \cite{GLSW} for an equivalent but more algorithmic definition). Under this viewpoint, Gu, Luo, Sun and Wu \cite{GLSW} obtained a discrete uniformization theorem for PL metrics with the help of the decorated Teichm\"uller space theory. Their discrete uniformization theorem completely resolved the existence part of Problem \ref{question-luo} (under Bobenko, Pinkall and Springborn's definition of discrete conformality). Similarly, a hyperbolic version of the discrete uniformization theory was established in \cite{GGLSW}. It is remarkable that Springborn \cite{S} established the equivalence between the discrete uniformization theorem on $\mathbb{S}^2$ and Rivin's realization theorem for ideal hyperbolic polyhedra \cite{Rivin}.

In case $\Sigma$ is non-compact, very little results are known related to Problem \ref{question-luo}. Inspired by Rodin and Sullivan's celebrated work \cite{Rodin-Sullivan}, where they proved Thurston's conjecture (i.e. the only complete flat circle packing metric on the hexagonal triangulation of the plane is the regular hexagonal packing), Wu, Gu and Sun \cite{WGS} considered the rigidity problem for the infinite hexagonal triangulation $T$ of the plane $\Sigma=\mathbb{C}$, see Figure \ref{fig-10}.

\begin{theorem}(\cite{WGS})
Let $l$ be a PL metric on $\mathbb{C}$ with standard hexagonal triangulation, which is PL conformal to $l_0\equiv1$. Suppose $(T,l)$ is flat, complete (i.e. isometric to $(T,l_0)$) and there is a constant $\del>0$ such that all angles $\leq \frac{\pi}{2}-\del$ ($\delta$-condition). Then $l \equiv C$ for some constant $C>0$.
\end{theorem}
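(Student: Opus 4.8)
The plan is to encode the metric through its conformal factor and to reduce flatness to a linear equation. Writing $l_{ij}=e^{u_i+u_j}$ with $u:V\to\mathbb{R}$, the hypotheses say that both $l$ and $l_0\equiv 1$ are flat, i.e. $K_i(u)=K_i(0)=0$ for every vertex $i$. I would first invoke the variational structure behind Luo's PL conformality: the discrete curvature $K=(K_i)$ is the gradient of a (locally, and after the Bobenko--Pinkall--Springborn extension, globally) convex energy, and its Jacobian is a weighted graph Laplacian,
\[
\frac{\partial K_i}{\partial u_i}=\sum_{j\sim i}w_{ij},\qquad \frac{\partial K_i}{\partial u_j}=-w_{ij}\ (j\sim i),
\]
with conductances $w_{ij}>0$ for nondegenerate triangles and with vanishing row sums, the latter reflecting the invariance of all angles under $u\mapsto u+\mathrm{const}$. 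Integrating $0=K_i(u)-K_i(0)$ along the segment $tu$, $t\in[0,1]$, and setting $W_{ij}=\int_0^1 w_{ij}(tu)\,dt$, this yields the discrete harmonic equation
\[
\sum_{j\sim i}W_{ij}\,(u_i-u_j)=0\qquad\text{for every }i\in V .
\]

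The second step is the a priori estimate playing the role of Rodin--Sullivan's ring lemma (and, I expect, of a Bounded Diameter Lemma). The $\delta$-condition forces every angle of $(T,l)$ into the compact interval $[2\delta,\tfrac\pi2-\delta]$, so each face is a uniformly nondegenerate triangle; by the law of sines its three edge lengths are mutually comparable with a constant depending only on $\delta$, whence $|u_i-u_j|\le M(\delta)$ for every edge $ij\in E$. Thus the discrete gradient of $u$ is uniformly bounded, $u$ grows at most linearly in the graph distance, and at the endpoint metric $l$ the conductances $w_{ij}$ are pinched between positive constants depending only on $\delta$, so the operator $Lu_i=\sum_{j\sim i}W_{ij}(u_j-u_i)$ is uniformly elliptic on the hexagonal lattice.

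Finally I would run a Liouville argument to conclude $u\equiv\mathrm{const}$, hence $l\equiv C$. The hexagonal lattice with bounded geometry and uniformly elliptic conductances is rough-isometric to $\mathbb{Z}^2$ and therefore recurrent, so every \emph{bounded-below} harmonic function is already constant; it thus suffices to rule out nonconstant harmonic $u$ of linear growth. Here completeness must be exploited: following Rodin--Sullivan, one sets up a length--area (discrete Dirichlet-energy) estimate for $u$ over a sequence of concentric combinatorial hexagonal rings about a base vertex, where completeness of $(T,l)$ guarantees infinitely many such rings of comparable $l$-size. The accumulated estimate forces the oscillation of $u$ across successive rings to be summable and ultimately to vanish, pinning $u$ to a constant.

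The main obstacle is twofold. First, the reduction to a \emph{uniformly} elliptic harmonic equation requires controlling $W_{ij}$ along the interpolation $tu$, yet the intermediate metrics $tu*l_0$ need not satisfy the $\delta$-condition and can even degenerate; one must either route the argument through the Bobenko--Pinkall--Springborn convex extension (where the Hessian stays positive semidefinite through degenerations, so at least $W_{ij}\ge 0$) or replace the straight segment by an admissible path. Second, and more seriously, is the Liouville/rigidity step itself: converting the uniform gradient bound plus completeness into constancy \emph{is} the hexagonal rigidity phenomenon, and making the length--area bookkeeping on the hexagonal combinatorics rigorous is the technical heart of the proof, the PL analogue of the delicate estimates in Rodin--Sullivan.
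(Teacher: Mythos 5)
There is a genuine gap, in fact two, and both sit exactly where you flagged ``obstacles'' but then offered repairs that do not work. First, the reduction to a uniformly elliptic harmonic equation fails. The admissible region for a triangle is convex in the variables $e^{-u_i}$, not in $u$, so the straight segment $tu$ can leave it and the weights $w_{ij}(tu)$ need not be defined; where they are defined, $w_{ij}=\cot\theta_k+\cot\theta_l$ (opposite angles) is nonnegative precisely when the \emph{intermediate} metric is Delaunay at that edge, and neither the $\delta$-condition at $t=1$ nor the equilateral structure at $t=0$ gives any control for $0<t<1$. Your proposed fix via the Bobenko--Pinkall--Springborn extension rests on a false implication: convexity of the extended energy makes its Hessian positive semidefinite, but a PSD matrix need not be an M-matrix, i.e.\ PSD does not force the off-diagonal entries $-w_{ij}$ to be $\leq 0$ (already one triangle with an obtuse angle has PSD Hessian with a positive off-diagonal entry). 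So all you can conclude is $\sum_{j\sim i}W_{ij}(u_i-u_j)=0$ with \emph{signed} $W_{ij}$, for which there is no maximum principle, no recurrence theory, and no Liouville theorem.

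Second, even granting $W_{ij}\in[c,C]$, the Liouville step cannot close as stated, and the missing mechanism is precisely the heart of the theorem. The paper's own noncomplete example (Figure \ref{fig-9}): take $u$ the restriction to $V$ of a linear function with small slope; this is flat, satisfies the $\delta$-condition, hence satisfies your uniformly elliptic harmonic equation with bounded gradient, and is nonconstant. So no combination of ellipticity, recurrence and gradient bounds can ever conclude; the only distinguishing hypothesis is completeness, and your length--area paragraph never says what completeness actually forbids. In the paper (following \cite{WGS}) completeness enters as injectivity of the developing map, and the real work is: (i) a maximum principle for flat Delaunay hexagon factors (Lemma \ref{mp}/\ref{key}), which by a compactness argument yields a quasi-harmonicity \emph{dichotomy} for the discrete gradient $\nabla_c u$ (Corollary \ref{hex2}) --- note the relevant object is $\nabla_c u$, not $u$, and it is only quasi-harmonic, obtained from rigidity of the hexagon rather than from integrating the curvature equation; (ii) the propagation estimate (Proposition \ref{2.2}) and Lemma \ref{MN}, upgrading ``$\sup\nabla_1 u$ is nearly attained at a point'' to ``$\nabla_1 u\approx M$ and $\nabla_\omega u\approx N$ on an arbitrarily large ball''; (iii) the overlap lemmas (Lemmas \ref{overlap1}, \ref{overlap2}): a near-linear conformal factor with $(M,N)\neq(0,0)$ forces a large ball to overlap itself with positive area, contradicting the embedding given by completeness. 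Your proposal contains no substitute for (iii), nor for the localization machinery (i)--(ii) that feeds into it; until the length--area sketch is turned into a concrete statement of the form ``linear growth plus completeness is impossible,'' the argument does not prove the theorem.
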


The \emph{$\delta$-condition} appeared above, while suitable for some purposes, is considerably less satisfying. The main result of this paper is to release the \emph{$\delta$-condition} to the \emph{Delaunay condition}. For each edge $ij\in E$, consider the two adjacent triangles $\triangle ijk$ and $\triangle ijl$, denote the sum of the opposite angles $\alpha_{ij}=\theta_{\angle ikj}+\theta_{\angle ilj}$, see Figure \ref{fig-g1}. A generalized PL metric on the triangulated surface $(T,l)$ is called \emph{Delaunay} if $\alpha_{ij}\leq \pi$ for all $ij\in E$. We have

\begin{theorem}\label{main}
Let $l$ be a generalized PL metric on $\mathbb{C}$ with standard hexagonal triangulation, which is PL conformal to $l_0\equiv1$. Suppose $(T,l)$ is flat, complete (i.e. isometric to $(T,l_0)$) and is Delaunay. Then $l\equiv C$ for some constant $C>0$.
\end{theorem}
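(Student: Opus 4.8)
The plan is to convert flatness into a discrete elliptic equation for the conformal factor and then to run a Liouville–type argument on the hexagonal lattice. Since $l_0\equiv1$, the metric is $l_{ij}=e^{u_i+u_j}$ and the assertion is exactly that $u$ is constant. Writing $\theta^{jk}_i$ for the angle at $i$ in the Euclidean triangle $ijk$, the law of cosines together with $\partial_{u_j}l_{jk}=l_{jk}$, $\partial_{u_j}l_{ij}=l_{ij}$, $\partial_{u_j}l_{ik}=0$ gives the basic variation formula $\partial_{u_j}\theta^{jk}_i=\cot\theta^{ij}_k$. Consequently, for neighbours $i\sim j$ bounding the triangles $ijk$ and $ijl$,
\begin{equation*}
\frac{\partial K_i}{\partial u_j}=-\big(\cot\theta^{ij}_k+\cot\theta^{ij}_l\big),\qquad
\cot\theta^{ij}_k+\cot\theta^{ij}_l=\frac{\sin\alpha_{ij}}{\sin\theta^{ij}_k\,\sin\theta^{ij}_l},
\end{equation*}
which is $\ge0$ precisely when the Delaunay inequality $\alpha_{ij}\le\pi$ holds. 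Since adding a constant to $u$ merely rescales $l$ and preserves all angles, $\sum_j\partial_{u_j}K_i=0$, hence $\partial_{u_i}K_i=-\sum_{j\sim i}\partial_{u_j}K_i$.

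First I would produce the harmonic equation. Both $u\equiv0$ and our $u$ are flat, $K_i\equiv0$, so integrating $K_i$ along a path from $0$ to $u$ and using the two identities above yields
\begin{equation*}
0=K_i(u)-K_i(0)=\sum_{j\sim i}\bar w_{ij}\,(u_i-u_j),
\end{equation*}
where $\bar w_{ij}$ is the corresponding average of $\cot\theta^{ij}_k+\cot\theta^{ij}_l$ along the path. Thus $u$ is harmonic for a cotangent–type Laplacian on the hexagonal lattice, and the Delaunay hypothesis is exactly what makes these weights nonnegative. One must be careful that only the endpoints, not the interior, of the path are assumed Delaunay; to keep the weights nonnegative throughout I would invoke the convex extension of the discrete–conformal energy of Bobenko–Pinkall–Springborn across the non–Delaunay walls (where the combinatorial Delaunay triangulation flips), so that the integrated weights may be taken $\ge0$. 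Equivalently, $u\equiv0$ and $u$ are two critical points of the associated convex functional, and rigidity is the statement that they differ only by an element of its kernel, the constants.

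Next I would set up the geometry. Because $(T,l)$ is flat, complete and simply connected, the developing map is an isometry onto $\mathbb{C}$, so the vertices map to a locally finite set $\{p_i\}\subset\mathbb{C}$ whose Delaunay triangulation is the whole hexagonal complex and which tiles the plane, the regular triangulation corresponding to the triangular lattice. The endgame is then a discrete Liouville theorem: a function harmonic for a nonnegative–weight Laplacian on the (planar, hence recurrent) hexagonal lattice must be constant once its growth and the ellipticity of the weights are controlled.

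The hard part will be exactly this Liouville step, and it is the reason the Delaunay condition is genuinely weaker than the $\delta$–condition. Under the $\delta$–condition all angles stay uniformly away from $0$ and $\pi/2$, giving uniform two–sided bounds on the weights $\bar w_{ij}$ and no degeneration, after which boundedness plus recurrence forces constancy. The Delaunay condition, by contrast, permits angles to tend to $0$ or $\pi$ — and a generalized metric even allows degenerate triangles — so the weights may vanish or blow up, the network may lose uniform ellipticity, and there is neither an a priori bound on the oscillation of $u$ nor a boundary to exploit. To overcome this I would (i) prove ring–lemma / hexagonal–packing–lemma type estimates, in the spirit of Rodin–Sullivan, using completeness and the fixed hexagonal combinatorics to bound the ratios $e^{u_i-u_j}$ of adjacent edge lengths and the circumradii, thereby ruling out persistent degeneration over large regions; (ii) upgrade these to a Harnack / maximum–principle estimate on the nonnegative network controlling the global oscillation of $u$; and (iii) conclude by a normal–families argument, showing that were $u$ nonconstant one could rescale about points where its oscillation concentrates, extract a limiting nonconstant flat complete Delaunay structure on $\mathbb{C}$, and contradict the curvature and Delaunay constraints. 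Establishing the quantitative rigidity in these three steps, namely that local near–regularity propagates to force exact regularity as the region exhausts $\mathbb{C}$, is where essentially all the work lies.
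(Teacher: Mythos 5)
Your opening computations (the variational formula for angles, the sign of $\partial K_i/\partial u_j$ under the Delaunay condition) agree with the paper's Lemma \ref{basic} and Corollary \ref{derivative}, but both of the two main steps of your plan have genuine gaps. First, the derivation of the harmonic equation $\sum_{j\sim i}\bar w_{ij}(u_i-u_j)=0$ with $\bar w_{ij}\ge 0$ is precisely the difficulty, not a preliminary: as the paper's remark following Lemma \ref{mp} points out, one does not know a priori that the two conformal factors can be joined by a path (monotone or otherwise) along which every hexagon stays Delaunay, and the entire technical content of Lemma \ref{key} is a flow construction that substitutes for such a path. Your proposed fix --- invoking the Bobenko--Pinkall--Springborn/Gu--Luo--Sun--Wu convex extension ``across non-Delaunay walls'' --- does not repair this: that extension re-triangulates by edge flips, so the weights produced by integrating its Hessian are cotangent weights of a \emph{changing} Delaunay triangulation rather than of the fixed hexagonal graph (Delaunay neighbours at intermediate times need not be hexagonal neighbours), and the underlying decorated Teichm\"uller machinery is established for finite triangulations, not for the infinite lattice. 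So the claimed identity on the hexagonal graph with nonnegative weights is unjustified.

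Second, and more seriously, the Liouville step is false as you have stated it. A function $u$ that is the restriction of a linear function $az+b$ gives a flat metric (by similarity), is Delaunay for suitable $a$, has bounded gradient, and satisfies any path-integrated harmonicity identity (both endpoints have $K\equiv 0$); this is exactly the spiral of Figure \ref{fig-9}. Hence no argument combining harmonicity, nonnegative weights, gradient bounds and recurrence can force constancy --- recurrence kills \emph{bounded} harmonic functions, and $u$ is not bounded. The only hypothesis that excludes the linear solutions is completeness, and it must enter through the overlap mechanism of Wu--Gu--Sun (Lemmas \ref{overlap1} and \ref{overlap2} in the paper): nearly constant, nonzero gradients on a sufficiently large ball force the developed image to overlap itself with positive area, contradicting the isometry with $(T,l_0)$. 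Your endgame (iii) instead asserts that a blow-up limit would ``contradict the curvature and Delaunay constraints,'' which the linear solutions satisfy, so the contradiction you aim for does not exist; and your step (ii), the ``Harnack / maximum-principle estimate,'' is exactly the paper's Lemma \ref{mp} and Proposition \ref{2.2}, whose proof under the bare Delaunay condition (allowing degenerate triangles and vanishing weights) is the technical heart of the paper and for which your sketch offers no mechanism.
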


\begin{figure}
\begin{minipage}[t]{0.48\linewidth}
\centering
\includegraphics[width=0.8\textwidth]{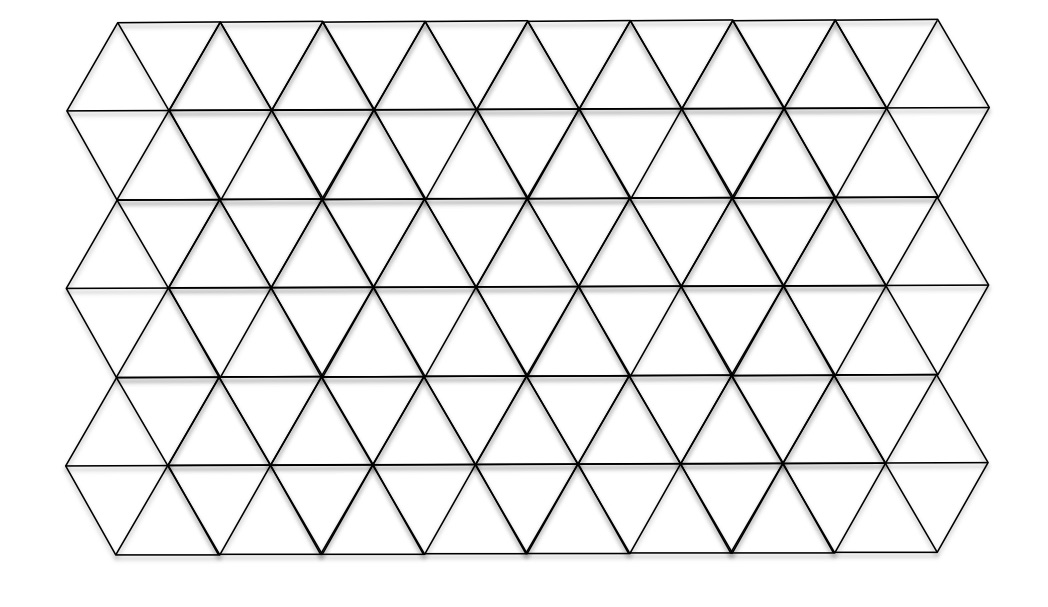}
\caption{A hexagonal triangulation}\label{fig-10}
\end{minipage}
\begin{minipage}[t]{0.5\linewidth}
\centering
\includegraphics[width=0.66\textwidth]{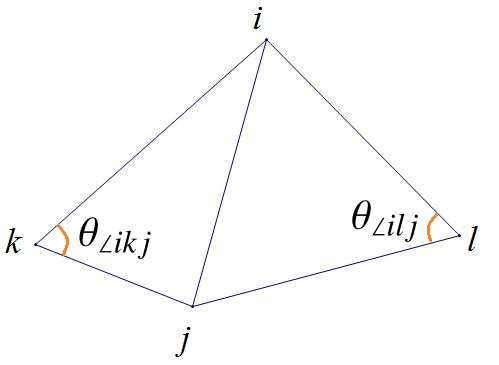}
\caption{An interior edge}\label{fig-g1}
\end{minipage}
\end{figure}

The Delaunay condition in Theorem \ref{main} is relatively satisfying. We shall show (see Section \ref{hyp}) that a PL metric is Delaunay if and only if the corresponding ideal hyperbolic polyhedron is convex. Moreover, the rigidity of PL conformality in Theorem \ref{main} is equivalent to a rigidity result for ideal hyperbolic polyhedra, which may be considered as an infinite and hyperbolic version of Cauchy \cite{Cauchy} and Alexandrov's \cite{Alexandrov}\cite{Bobenko-Izmestiev} rigidity for Euclidean polyhedra. The Delaunay condition is a satisfying condition in the sense that, generally, polyhedron rigidity holds only for the convex ones. See Section \ref{hyp} for more details.


The condition of completeness in Theorem \ref{main} can't be removed. In fact, we consider $V$ as the points in $\mathbb{C}$ with complex coordinate $m\cdot 1+n\cdot \omega$ with $\omega=\frac{1+\sqrt{-3}}{2}$ for $m,n\in \mathbb{Z}$. Let $u$ be the restriction on $V$ of a linear function $\ti{u}=az+b$. Then by similarity we see $u*l_0$ is flat. For suitable $a$, the picture is shown in Figure \ref{fig-9}, which is regarded as a lift of the covering map $\mathbb{C}\rightarrow \mathbb{C}\setminus \{0\}$. The PL metric in this way is not complete unless $u$ is constant. Wu, Gu and Sun \cite{WGS} conjectured that it is the only possibility when just assuming flatness.

\begin{figure}
\centering
\includegraphics[width=0.4\textwidth]{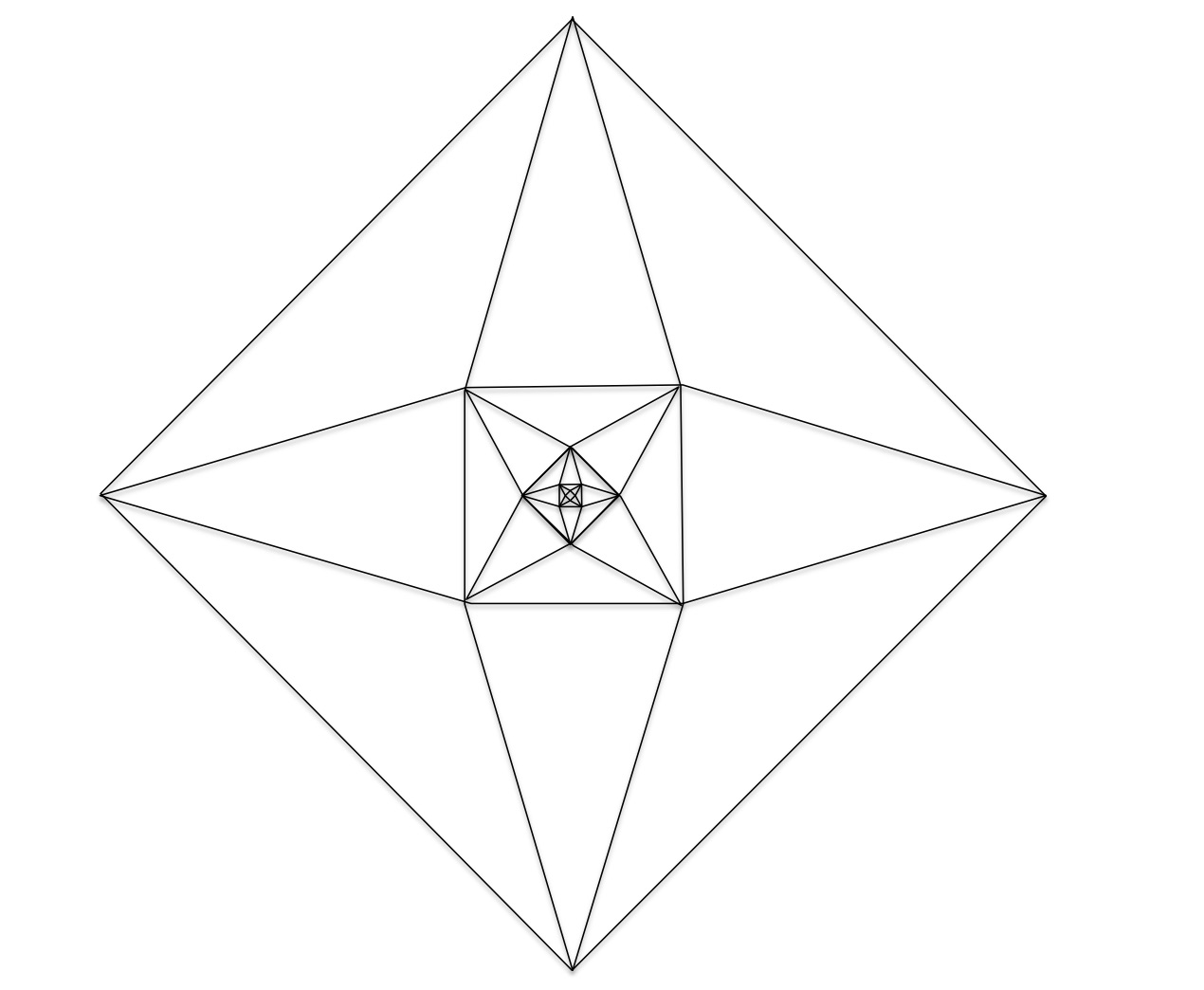}
\caption{A noncomplete hexagonal triangulation}\label{fig-9}
\end{figure}

\begin{conj}
Let $l$ be a PL metric on $\mathbb{C}$ with standard hexagonal triangulation, which is PL conformal to $l_0\equiv 1$. Suppose $(T,l)$ is flat. Then the conformal factor $u$ is the restriction on $V$ of a linear function $\ti{u}=az+b$.
\end{conj}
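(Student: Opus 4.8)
The plan is to recast flatness in terms of the developing map, isolate the affine conformal factors as the discrete analogues of $z\mapsto e^{cz}$, and then attack the resulting nonlinear system by combining infinitesimal rigidity with a discrete Liouville theorem.

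First I would develop the metric. Since $(T,l)$ is flat and $\mathbb{C}$ is simply connected, the metric develops to an immersion $p:V\to\mathbb{C}$, affine on each face, with $|p_i-p_j|=e^{u_i+u_j}$ on every edge and cone angle $2\pi$ at every vertex. Writing $r_i=e^{-u_i}$, each triangle $\triangle ijk$ of $(T,l)$ is \emph{similar} to the abstract triangle whose sides, opposite $i,j,k$, are $r_i,r_j,r_k$: dividing the three edge lengths $e^{u_j+u_k},e^{u_k+u_i},e^{u_i+u_j}$ by $e^{u_i+u_j+u_k}$ turns them into $r_i,r_j,r_k$. Hence the angle of $\triangle ijk$ at $i$ is $\theta_i^{jk}$ with $\cos\theta_i^{jk}=(r_j^2+r_k^2-r_i^2)/(2r_jr_k)$, and flatness becomes the system $F_i(u):=2\pi-\sum_{jk}\theta_i^{jk}(u)=0$ for all $i\in V$. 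I would check that the affine factors $u=\tilde u|_V$, $\tilde u(z)=\langle a,z\rangle+b$, solve this system — these are the discrete exponential maps, and their flatness is the similarity observation already recorded in the paper — so the affine solutions form a $3$-parameter family.

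Next I would linearize. By the variational formula for PL-conformal angle variations, $dF_i=\sum_{j\sim i}w_{ij}(du_j-du_i)$, where $w_{ij}=\tfrac12(\cot\theta^{ij}_k+\cot\theta^{ij}_l)$ and $k,l$ are the vertices opposite the edge $ij$; thus $dF_u=L_u$ is a cotangent Laplacian, and $w_{ij}\ge0$ exactly when $\alpha_{ij}=\theta^{ij}_k+\theta^{ij}_l\le\pi$, i.e.\ under the Delaunay condition. The goal is an \emph{infinitesimal rigidity} statement: every solution $\dot u$ of $L_u\dot u=0$ of at most linear growth is affine, $\dot u(z)=\langle a,z\rangle+b$. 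On the regular hexagonal background all $w_{ij}=1/\sqrt3$ and $L_0$ is the genuine triangular-lattice Laplacian, whose harmonic functions of linear growth are affine by a standard discrete Liouville theorem; the substance is to propagate this to a general flat $u$. Granting infinitesimal rigidity, global rigidity would follow from a continuity/degree argument: the flat solutions would form a $3$-manifold of which the affine family is a component, and one shows no flat solution lies outside it. Alternatively one can read $F_i=0$ as a discrete Cauchy problem — since $\theta_i^{jk}$ is monotone in each incident edge length, the equation at a vertex determines its last unknown neighbour from the others — and classify the entire lattice orbits, showing that only affine Cauchy data extend to all of $\mathbb{C}$.

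I expect the hard part to be precisely the step that dropping Delaunay and completeness removes from the toolkit used for Theorem \ref{main}. Without Delaunay some weights $w_{ij}$ turn negative, so $L_u$ is indefinite and the discrete maximum principle is unavailable; without completeness the developed image may grow exponentially (as it does for every nonconstant affine $u$), so there is no a priori growth bound to feed a Liouville argument. The crux is therefore a discrete Liouville theorem for an \emph{indefinite} cotangent Laplacian \emph{without} a growth hypothesis, and I would try to exploit the rigid hexagonal combinatorics — for instance by building a modified positive quadratic form that tolerates the negatively weighted edges, or by promoting the developing map $p$ to a genuinely discrete-holomorphic object and invoking a monodromy argument — to force the kernel down to the three affine directions. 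This indefinite, growth-free Liouville step is, in my view, the real obstacle and the reason the statement remains a conjecture.
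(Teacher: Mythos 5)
You have not proved the statement, and neither does the paper: this is precisely the conjecture of Wu, Gu and Sun that the paper records as open, proving Theorem \ref{main} only under the two additional hypotheses (completeness and Delaunay) whose removal your last paragraph correctly identifies as the difficulty. So there is no proof in the paper to compare against, and your proposal, by your own admission, stops at the decisive step. The setup is sound and consistent with what the paper does establish: the affine factors are flat by the similarity observation the paper makes below Figure \ref{fig-9}, the linearization of the curvature map is the cotangent Laplacian by Lemma \ref{basic} and Corollary \ref{derivative}, and nonnegativity of your weights $w_{ij}$ is exactly the Delaunay condition. But a reduction of a conjecture to a harder-looking sub-statement (``an indefinite, growth-free discrete Liouville theorem'') is not a proof of it.

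Beyond the main gap, both of your fallback routes have concrete defects. First, even granting infinitesimal rigidity, the continuity/degree argument is unsubstantiated: the set of flat conformal factors sits inside $\mathbb{R}^V$ with $V$ infinite, and has no evident manifold structure, compactness, or properness, all of which a degree argument needs; moreover the ``linear growth'' restriction on kernel elements $\dot u$ is not a hypothesis you are entitled to, since nothing bounds growth once completeness is dropped (the affine solutions themselves are unbounded), and without it the kernel of even the background operator $L_0$ is infinite-dimensional, containing discrete harmonic polynomials of every degree on the triangular lattice. Second, the ``discrete Cauchy problem'' alternative is not well posed as stated: your monotonicity claim fails in the conformal class, because varying $u_{j+1}$ changes two incident edge lengths simultaneously, and by Lemma \ref{basic} the relevant derivative is $\partial\theta_{\angle i_ji_0i_{j+1}}/\partial u_{j+1}=\cot\theta_{\angle i_0i_ji_{j+1}}$, which changes sign when the angle at $i_j$ passes $\pi/2$; hence the vertex equation need not determine the ``last unknown neighbour'' uniquely, nor be solvable at all if the required residual angle leaves the achievable range. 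So the orbit classification would itself require a new idea. In short: your framing agrees with why the statement remains a conjecture, but the proposal contains no argument for the one step that matters.
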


The paper is organized as follows. We prove the main Theorem \ref{main} in Section \ref{p3}. In Section \ref{sec-outline-proof}, we outline the proof of Theorem \ref{main}. The main technical Lemma \ref{mp} and Proposition \ref{2.2} are postponed to Section \ref{p1} and Section \ref{p2} respectively. In Section \ref{hyp}, we interpret Theorem \ref{main} from the viewpoint of hyperbolic geometry, to a rigidity theorem for ideal convex polyhedra.\\

\noindent\textbf{Acknowledgement:}
During the preparation of this article, we showed our results to Professor Feng Luo. He told us that he and his coauthors had already got similar rigidity results (but without posting them online), which had been submitted to a journal. We thank Professor Feng Luo and Tianqi Wu for helpful conversations and suggestions. We would also like to thank Xiaoxiao Zhang for drawing the pictures in the paper. The first author is supported by NSF of China (No.11871283 and No.11971244). The second author is supported by NSF of China (No.11871094). The second author would like to thank the hospitality of Professor Weiping Zhang and Huitao Feng during his visit to Chern Institute of Mathematics in Spring 2018 when he initiated this collaboration. The third author is supported by NSF of China (No.11571185 and No.11871283), China Scholarship Council (No. 201706135016), and the Fundamental Research Funds for the Central Universities, Nankai University(63191506).

\section{Proof of Theorem \ref{main}}\label{p3}

\subsection{Outline of the proof}\label{sec-outline-proof}
In this section, we outline the proof of Theorem \ref{main}. The key step is to establish a maximum principle in the PL conformal settings. Let $H=H(i_0;i_1,i_2,i_3,i_4,i_5,i_6)$ be a hexagon centered at $i_0$ with PL metric $l_0\equiv 1$, see Figure \ref{fig-8}.

Let $u$ be a conformal factor and $l=u*l_0$. If there is no confusion, we use $u_j$ instead of $u_{i_j}$. Since the angles are invariant under the similar transformation, sometimes we assume $u_0=0$. For $u_0=0$, the length of the edge $i_ai_b$ is $l_{ab}=e^{u_a+u_b}$. Denote $\mc{T}$ as the space of the conformal factors such that the corresponding hexagons consisting of six generalized triangles. Recall for a generalized triangle, we mean the lengths of the edges are positive but the triangle inequality can be equality.
\begin{equation*}
\mc{T}=\{u=(u_0,u_1,\cdots,u_6)\in \mathbb{R}^7: ~ l_{ab}+l_{bc}\geq l_{ca}\text{ for } \{a,b,c\}=\{i_0,i_j,i_{j+1}\},j=1,\cdots,6\}.
\end{equation*}
Throughout this paper, the index $i_{k+6}=i_k$ for $k\geq 1$. For a generalized triangle, the angles are well defined and are continuously extended to the degenerate case. (If the equality of triangle inequality holds for a triangle, then we call the triangle is degenerate.) For example, for $abc\in F$, if $l_{ab}=l_{bc}+l_{ca}$, then $\theta_{\angle acb}=\pi$, $\theta_{\angle cab}=\theta_{\angle abc}=0$.
For $u\in\mc{T}$, denote $$\theta_{u}=\sum\limits_{j=1}^{6}\theta_{\angle i_ji_0i_{j+1}},\quad K_{u}=2\pi-\theta_{u}.$$

\begin{figure}
\centering
\includegraphics[width=0.4\textwidth]{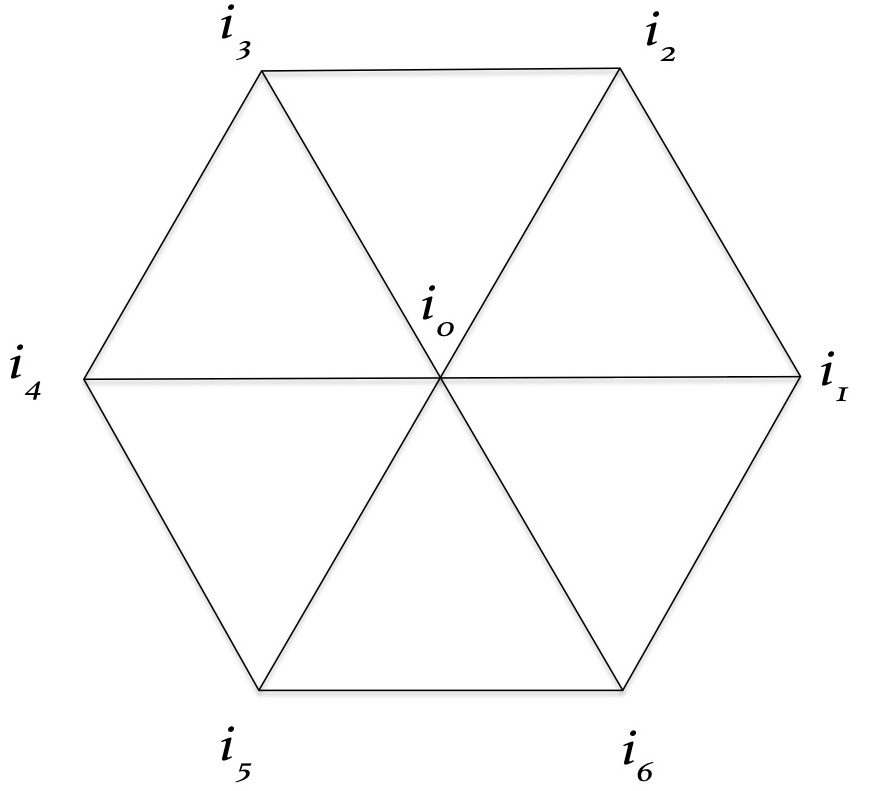}
\caption{A hexagon with center}\label{fig-8}
\end{figure}

Denote $\mc{D}$ as the space of the conformal factors such that the corresponding hexagons in $\mc{T}$ are Delaunay. Recall $\alpha_{i_0i_j}=\theta_{\angle i_0i_{j-1}i_j}+\theta_{\angle i_0i_{j+1}i_j}$.
$$\mc{D}=\{u\in\mc{T}: ~\alpha_{i_0i_{j}}\leq \pi,~j=1,\cdots,6\}.$$
For $u,v\in\mathbb{R}^7$, we denote $u\geq v$ if $u_i\geq v_i$ for $i=1,\cdots, 7.$ The maximum principle is:
\begin{lemma}\label{mp}
Let $\bar{u},\underline{u}\in \mc{D}$ with $\bar{u}_0=\underline{u}_0=0$. If $K_{\underline{u}}=K_{\bar{u}}=0$ and $\bar{u}\geq \underline{u}$, then $\bar{u}=\underline{u}$.
\end{lemma}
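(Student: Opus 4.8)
The plan is to reduce Lemma \ref{mp} to a monotonicity property of the central cone angle $\theta_u$ regarded as a function of the outer factors $(u_1,\dots,u_6)$, with $u_0=0$ fixed. First I would compute, for each triangle $T_j=i_0i_ji_{j+1}$, the derivative of its apex angle $\phi_j:=\theta_{\angle i_ji_0i_{j+1}}$ with respect to a base factor. Writing $l_{ab}=e^{u_a+u_b}$ and combining the law of cosines with the law of sines, a direct calculation gives the clean identity
\begin{equation*}
\frac{\partial\phi_j}{\partial u_{j+1}}=\cot\theta_{\angle i_0i_ji_{j+1}},\qquad \frac{\partial\phi_j}{\partial u_j}=\cot\theta_{\angle i_0i_{j+1}i_j},
\end{equation*}
i.e. the derivative of the apex angle in the factor at one base vertex equals the cotangent of the angle at the other base vertex. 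Since $i_j$ lies on exactly the two triangles $T_{j-1}$ and $T_j$, summing the two contributions yields
\begin{equation*}
\frac{\partial\theta_u}{\partial u_j}=\cot\theta_{\angle i_0i_{j-1}i_j}+\cot\theta_{\angle i_0i_{j+1}i_j}=\frac{\sin\alpha_{i_0i_j}}{\sin\theta_{\angle i_0i_{j-1}i_j}\,\sin\theta_{\angle i_0i_{j+1}i_j}},
\end{equation*}
where I used $\cot x+\cot y=\sin(x+y)/(\sin x\sin y)$ together with $\alpha_{i_0i_j}=\theta_{\angle i_0i_{j-1}i_j}+\theta_{\angle i_0i_{j+1}i_j}$.

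This formula is the heart of the matter: on $\mathcal D$ we have $\alpha_{i_0i_j}\le\pi$, hence $\sin\alpha_{i_0i_j}\ge0$ and therefore $\partial\theta_u/\partial u_j\ge0$ for every $j$, with strict inequality unless the edge $i_0i_j$ is exactly co-circular ($\alpha_{i_0i_j}=\pi$). Thus, within the Delaunay region the central cone angle is nondecreasing in each outer factor. I would then compare $\bar u$ and $\underline u$ along the monotone segment $\gamma(t)=\underline u+t(\bar u-\underline u)$, whose velocity $\bar u-\underline u$ is componentwise nonnegative by hypothesis. Granting that $\gamma$ stays in $\mathcal D$, the chain rule gives $\frac{d}{dt}\theta_{\gamma(t)}=\sum_{j}(\partial\theta_u/\partial u_j)(\bar u_j-\underline u_j)\ge0$, so $t\mapsto\theta_{\gamma(t)}$ is nondecreasing. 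Since $K_{\bar u}=K_{\underline u}=0$ forces $\theta_{\bar u}=\theta_{\underline u}=2\pi$, this function is constant and its derivative vanishes identically; as each summand is nonnegative, every index $j$ with $\bar u_j>\underline u_j$ must satisfy $\alpha_{i_0i_j}\equiv\pi$ along $\gamma$. A local rigidity argument for such co-circular configurations, together with the closing-up constraint $\theta\equiv2\pi$, should then force $\{j:\bar u_j>\underline u_j\}=\emptyset$, that is $\bar u=\underline u$.

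The main obstacle is that neither $\mathcal T$ nor $\mathcal D$ is convex, so the segment $\gamma$ need not remain admissible. Indeed, when both endpoints carry degenerate triangles (which are permitted in $\mathcal T$), the midpoint can violate the triangle inequality: for the edge whose two flanking lengths are $p,q$ at $t=0$ and $p',q'$ at $t=1$ and whose opposite length is at the degenerate value $p+q$, resp.\ $p'+q'$, the corresponding slack at $t=\tfrac12$ equals $\sqrt{pp'}+\sqrt{qq'}-\sqrt{(p+q)(p'+q')}\le0$ by Cauchy--Schwarz, and is negative unless the endpoint data are proportional. Hence the monotonicity step cannot be applied naively, and the degenerate cases (where some $\cot$ blows up) demand separate treatment. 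I expect to resolve this by first reducing, through a continuity/limiting argument, to strictly Delaunay and nondegenerate interior configurations, where all angle functions are smooth and the denominators above are bounded away from $0$; on such configurations one can hope to show that either the straight segment or a suitably adapted monotone path stays in $\mathcal D$. The genuinely delicate point, and the one I expect to be hardest, is the equality case $\alpha_{i_0i_j}=\pi$: one must rule out a nontrivial co-circular deformation that preserves flatness at $i_0$, and this is precisely where the special structure of the regular hexagonal configuration $l_0\equiv1$ must be used.
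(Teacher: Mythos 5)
Your opening computation is correct and coincides with the paper's starting point: the identity $\partial\theta_u/\partial u_j=\cot\theta_{\angle i_0i_{j-1}i_j}+\cot\theta_{\angle i_0i_{j+1}i_j}=\sin\alpha_{i_0i_j}/(\sin\theta_{\angle i_0i_{j-1}i_j}\sin\theta_{\angle i_0i_{j+1}i_j})\ge 0$ on $\mathcal{D}$ is exactly Lemma \ref{basic} and Corollary \ref{derivative}. But everything after that is a statement of intent rather than an argument, and the two points you defer are precisely the entire content of the paper's proof. The paper itself flags this in the remark following Lemma \ref{mp}: since $\partial K_u/\partial u_j\le 0$ on $\mathcal{D}$, it ``seems'' one can directly compare $K_{\underline{u}}$ and $K_{\bar{u}}$, but a priori one does not know that $\bar{u}$ and $\underline{u}$ can be joined inside $\mathcal{D}$ by a monotone (broken) path --- and constructing such a path \emph{is} the proof. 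Your own Cauchy--Schwarz computation correctly shows the straight segment can leave $\mathcal{T}$, but your proposed repairs --- ``a continuity/limiting argument'' reducing to strictly Delaunay, nondegenerate configurations, then ``a suitably adapted monotone path'' --- are not carried out, and the reduction is not available: $K_u=0$, $\underline{u}\le u\le\bar{u}$ and the Delaunay inequalities are closed constraints, the configurations one must rule out are exactly those with $\alpha_{i_0i_j}=\pi$ on \emph{every} edge where $u_j<\bar{u}_j$, and degenerate triangles are admissible in $\mathcal{D}$; there is no open dense set of good configurations to which a perturbation argument could pass without destroying the hypotheses.

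What the paper does at this exact point is the nontrivial work. When all increasable coordinates are blocked by $\alpha_{i_0i_j}=\pi$, it takes a maximal block $u_s=\bar{u}_s,\;u_{s+1}<\bar{u}_{s+1},\dots,u_{s+l-1}<\bar{u}_{s+l-1},\;u_{s+l}=\bar{u}_{s+l}$ and runs an ODE flow $\dot{u}^t_j=X_j$ in which the $X_j$ solve a three-term linear system built from the cotangent coefficients $A_j,B_j,C_j$, designed so that the interior angle sums $\alpha_{i_0i_j}$ stay fixed while the last one strictly increases; positivity of the $X_j$ (so the flow is monotone and respects $u\le\bar{u}$) is proved via the inductive recursion $f_{s+1}=A_{j-s}f_s-B_{j-s+1}C_{j-s}f_{s-1}$, degeneration of triangles along the flow is excluded by Lemma \ref{tri}, so the flow must terminate on a new coordinate hitting $\bar{u}$; iterating and then invoking a compactness/infimum argument (Claim 0 and the set $S$ with $\|\bar{u}-u\|_1$) yields the contradiction with $\bar{\alpha}_{i_0i_{m-1}}\le\pi$. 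Your sentence ``a local rigidity argument for such co-circular configurations \dots should then force'' the conclusion is a placeholder for all of this, not a step of a proof. (A side remark: the closing appeal to ``the special structure of the regular hexagonal configuration'' is misplaced --- the paper proves the lemma for arbitrary $n$-gons, and what is actually used is the multiplicative structure $l_{ab}=e^{u_a+u_b}$ of conformal factors over $l_0\equiv 1$, e.g.\ in Lemma \ref{tri}, not any hexagonal symmetry.)
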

For more general applications, we prove the maximum principle for general $n$ in Lemma \ref{key}.
\begin{remark}
The maximum principle above has the same formation as the strong maximum principle in the smooth case. That is, let $D$ be a bounded domain, $p\in D$, suppose $\triangle \bar{u}=\triangle \underline{u}=0$ in $D$ and $\bar{u}\geq \underline{u}$ on $\partial D$, then $\bar{u}(p)= \underline{u}(p)$ implies $\bar{u}\equiv \underline{u}$ in $D$.
\end{remark}
\begin{remark}
From Corollary \ref{derivative}, for $u\in \mc{D}$, we have $\frac{\partial K_u}{\partial u_j}\leq 0$. It seems we can directly obtain $K_{\underline{u}}\geq K_{\bar{u}}$. But the problem is in $\mc{D}$, apriori we don't know whether we can connect $\bar{u}$ and $\underline{u}$ by a broken line such that every segment is along the positive direction of a coordinate axis. In fact, it is what we show in the proof.
\end{remark}

Let $(T,l)$ be a hexagonal triangulated plane with generalized PL metric $l$ conformal to the standard one. Then the vertices of $T$ in $\mathbb{C}$ are $\mathbb{Z}$-spanned by $1$ and $\omega=\frac{1+\sqrt{-3}}{2}$. Let $l=u*l_0$, $\nabla_c u(i)=u(i+c)-u(i)$ for $c=1$ or $\omega$. For $i\in V$, $R\in \mathbb{N}$, denote the ball $B(i,R)$ as the set of triangles whose vertices can be connected with $i$ by a path of at most $R$ edges.

From the maximum principle above, we can show the following proposition, which plays a similar role as Wu-Gu-Sun's Lemma 2.2 in \cite{WGS}. 
\begin{proposition}\label{2.2}
For any $\ep>0$, $R\in \mathbb{N}$, there exists a constant $\delta>0$ depending on $\ep,R$, such that for any $M>0$, $i\in V$, if
$$\nabla_c u(i)\geq M-\delta \text{ and } \nabla_c u|_{B(i,R)}\leq M,$$
then $\nabla_c u|_{B(i,R)}\geq M-\ep$.
\end{proposition}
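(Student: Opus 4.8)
The plan is to argue by contradiction, extract a limiting flat configuration by compactness, and then propagate the equality case across the ball using the maximum principle of Lemma \ref{mp}. Fix the direction $c\in\{1,\omega\}$ and suppose the assertion fails for some $\ep>0$ and $R$. Then for every $n$ there exist $M_n>0$, a vertex $i_n$, and a (globally flat and Delaunay) conformal factor $u^{(n)}$ with $\nabla_c u^{(n)}(i_n)\ge M_n-\tfrac1n$ and $\nabla_c u^{(n)}\le M_n$ on $B(i_n,R)$, yet $\nabla_c u^{(n)}(j_n)<M_n-\ep$ at some $j_n\in B(i_n,R)$. Using the lattice translation symmetry I move $i_n$ to the origin, and since all angles are unchanged under adding a constant to $u$ I normalise $u^{(n)}(0)=0$. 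As $B(0,R)$ is a fixed finite combinatorial ball, after passing to a subsequence the offending vertex is a fixed $j^\ast$, and $M_n\to M\in[0,\infty]$.

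Suppose for the moment that $M<\infty$ and that the factors $u^{(n)}$, restricted to $B(0,R)$ together with one further layer of vertices, stay in a compact set; pass to a subsequence converging to a limit $u^\infty$. Then $u^\infty$ is a generalised flat Delaunay factor with $u^\infty(0)=0$, $\nabla_c u^\infty\le M$ on $B(0,R)$, $\nabla_c u^\infty(0)=M$, and $\nabla_c u^\infty(j^\ast)\le M-\ep$. The limit may contain degenerate triangles (angles $0$ or $\pi$), but this is harmless: the class $\mc D$ is defined to include degenerate generalised triangles, so Lemma \ref{mp} applies verbatim to $u^\infty$.

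Granting this limit, the contradiction is produced by propagating the equality $\nabla_c u^\infty=M$ outward. Let $p$ be any vertex with $\nabla_c u^\infty(p)=M$ and compare the hexagon centred at $p$ with the hexagon centred at its $c$-neighbour $p+c$. Writing $\underline w$ and $\bar w$ for the six boundary values of the two hexagons, each normalised so that the centre value is $0$, one computes for every neighbour $v_k$ of the centre that $\bar w_k-\underline w_k=\nabla_c u^\infty(p+v_k)-\nabla_c u^\infty(p)=\nabla_c u^\infty(p+v_k)-M\le 0$, by the bound $\nabla_c u^\infty\le M$ on $B(0,R)$. Hence $\bar w\le\underline w$; both configurations are flat ($K=0$) and Delaunay with equal centres, so Lemma \ref{mp} (with the roles of the two factors interchanged) forces $\bar w=\underline w$, i.e. $\nabla_c u^\infty(p+v_k)=M$ at all six neighbours of $p$. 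Starting from $p=0$ and iterating, the equality $\nabla_c u^\infty=M$ spreads to every vertex of $B(0,R)$; at each step only the global flatness (which makes every hexagon admissible) and the bound $\nabla_c u^\infty\le M$ on $B(0,R)$ are used. In particular $\nabla_c u^\infty(j^\ast)=M$, contradicting $\nabla_c u^\infty(j^\ast)\le M-\ep$.

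The main obstacle is the compactness assumed in the second paragraph, namely ruling out $M_n\to\infty$ and, even when $M_n$ is bounded, preventing $u^{(n)}$ from escaping to infinity in the direction transverse to $c$. The hypotheses control only the single difference $\nabla_c u$, and the two-parameter family of linear factors $\mathrm{Re}(az+b)$ already furnishes genuinely flat Delaunay configurations with arbitrarily large transverse gradient; moreover one cannot simply subtract a linear factor, since flatness is not preserved under such a subtraction. The way through is to use the global flatness together with the Delaunay condition to obtain uniform two-sided bounds on the relevant edge-length ratios over $B(0,R)$---a ring-lemma type estimate in the spirit of Rodin--Sullivan and of Wu--Gu--Sun's Lemma 2.2---which both forces $M<\infty$ and yields the required precompactness in the generalised, degeneration-allowing sense. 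Establishing this uniform estimate is the technical heart of the argument.
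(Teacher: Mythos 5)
Your strategy is essentially correct, and it is a genuinely different route from the paper's. You extract, by contradiction and compactness, a limiting generalized flat Delaunay configuration $u^\infty$ on a ball, and then propagate the equality $\nabla_c u^\infty=M$ outward by applying Lemma \ref{mp} to the pair of hexagons centered at $p$ and at $p+c$; this translation trick (comparing $u$ with $\tilde{u}(\cdot)=u(\cdot+c)$) is the same comparison the paper uses, but everything else is organized differently. The paper stays quantitative: it proves a dichotomy for a single hexagon (Lemma \ref{hex1}: either all neighboring values of $\nabla_c u$ are $\ep$-close to the center value, or $\nabla_c u$ is quasi-harmonic at the center with a uniform factor $m(\ep)$ --- the dichotomy itself proved by compactness of the one-hexagon space $\mc{D}^0$ together with Lemma \ref{key}), and then runs an induction over the layers of $B(i,R)$ with the explicit recursion $\ep_{j-1}=\min\{\ep_j/2,\,\ep_j m(\ep_j/2)\}$, $\delta=\ep_0$. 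Your soft argument dispenses with the quasi-harmonicity machinery and the epsilon bookkeeping; what it gives up is effectivity --- your $\delta(\ep,R)$ comes from a compactness contradiction and is not computable, whereas the paper's recursion produces one in principle. Both proofs rest on the same two pillars: the maximum principle and an a priori gradient bound.

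The step you flag as the unresolved ``technical heart'' is in fact not open: it is precisely Lemma \ref{ratio} of this paper (which is Lemma 2.4 of Wu--Gu--Sun, not their Lemma 2.2; their Lemma 2.2 is the analogue of Proposition \ref{2.2} itself). That lemma says flatness alone --- no Delaunay condition needed --- forces a universal bound $|\nabla u|\leq M_0$ along every edge. This instantly supplies everything you postponed: $M_n\leq M_0+1/n$, so $M<\infty$ after passing to a subsequence; and with the normalization $u^{(n)}(0)=0$ one gets $|u^{(n)}|\leq (R+2)M_0$ on $B(0,R+2)$, hence precompactness, the limit remaining an admissible configuration because the generalized triangle inequalities, $K=0$, and $\alpha\leq\pi$ are all closed conditions. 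Note also that your motivating worry is vacuous for the same reason: linear factors $\mathrm{Re}(az+b)$ with large $|a|$ do not yield flat configurations at all (they violate the triangle inequalities), so there is no family of flat Delaunay configurations with arbitrarily large transverse gradient. With Lemma \ref{ratio} cited, your argument closes; the only remaining bookkeeping is to take the limit on a slightly larger ball, say $B(0,R+2)$, so that the hexagons centered at the vertices $p+c$ used in the propagation step are defined in the limit, and to observe that the propagation only ever invokes the bound $\nabla_c u^\infty\leq M$ at vertices of $B(0,R)$, which is exactly where it is available.
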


We postpone the proof of Lemma \ref{mp} to Section \ref{p1} and prove it for general $n$. We postpone the proof of Proposition \ref{2.2} to Section  \ref{p2}.
Under Proposition \ref{2.2}, Wu-Gu-Sun's method in \cite{WGS} can apply to prove Theorem \ref{main}. Notice that the notation $\Delta$ in \cite{WGS} is replaced by $\nabla$ in this paper.

First we show the ``gradient" $\nabla u$ has a universal bound apriori, which is also showed in Lemma 2.4 of \cite{WGS}.
\begin{lemma}\label{ratio}
Let $i_0i_1i_2$ form a triangle in $T$. Then there is a universal constant $\ep_0$, such that $l_{i_1i_2}\geq \ep_0 l_{i_0i_2}$. In other words, there a universal constant $M_0$ such that $|\nabla u|\leq M_0$, where $\nabla u=u(i^{\pr})-u(i)$ for some $i\sim i^{\pr}$.
\end{lemma}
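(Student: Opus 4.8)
The plan is to translate the length comparison into a bound on the conformal factor and then into a uniform bound on the angles of the triangulation. Since $l_0\equiv 1$ we have $l_{ab}=e^{u_a+u_b}$, so for the triangle $i_0i_1i_2$ the two edges through $i_2$ satisfy $l_{i_1i_2}/l_{i_0i_2}=e^{u_1-u_0}$. Hence the asserted inequality $l_{i_1i_2}\ge \ep_0 l_{i_0i_2}$ is exactly a lower bound $u_1-u_0\ge \log\ep_0$ on the gradient along the edge $i_0i_1$; interchanging the roles of $i_0$ and $i_1$ (both are endpoints of that edge and both are vertices of the triangle) turns this into the companion upper bound, so the one–sided statement is equivalent to the two–sided gradient bound $|u(i)-u(i^{\pr})|\le M_0$ with $M_0=-\log\ep_0$. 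By the law of sines in $i_0i_1i_2$, $l_{i_1i_2}/l_{i_0i_2}=\sin\theta_{\angle i_1i_0i_2}/\sin\theta_{\angle i_0i_1i_2}$, so it suffices to prove a single statement: there is a universal $\theta_0>0$ such that every angle of every triangle lies in $[\theta_0,\pi-\theta_0]$. A uniform lower bound $\theta\ge\theta_0$ already forces $\theta\le\pi-2\theta_0$ for each angle (its two companions are $\ge\theta_0$), whence $\sin\theta\ge\sin\theta_0$ and the sine ratio lies in $[\sin\theta_0,(\sin\theta_0)^{-1}]$; one then takes $\ep_0=\sin\theta_0$.

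To set up the angle bound I would work in the flower of six triangles around a vertex $i_2$, with neighbours $w_1,\dots,w_6$ in cyclic order and gradients $t_k:=u_{w_k}-u_{i_2}$. A direct computation from the law of cosines (write $l_{ab}=e^{u_a+u_b}$ and divide out the common factor) gives the clean identity
\begin{equation*}
\cos\theta_{\angle w_ki_2w_{k+1}}=\cosh(t_k-t_{k+1})-\tfrac12 e^{t_k+t_{k+1}},
\end{equation*}
which reduces to $\cos(\pi/3)=\tfrac12$ in the unperturbed case $t_k\equiv 0$. Flatness is the constraint $\sum_{k=1}^{6}\theta_{\angle w_ki_2w_{k+1}}=2\pi$, and the degree being exactly six means there are precisely six such angles. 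From the identity one reads off the key qualitative fact: a large gradient on an edge $i_2w_k$ (i.e.\ $|t_k|$ large) forces the two triangles sharing that edge to be nearly right–angled at the vertices opposite $i_2w_k$, so the sum of the angles opposite that edge approaches $\pi$. This is precisely the quantity controlled by the Delaunay inequality $\alpha_{i_2w_k}\le\pi$.

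The heart of the proof, and the step I expect to be the main obstacle, is to rule out the blow–up $\sup_k|t_k|=\infty$ using flatness together with Delaunay; flatness at $i_2$ alone is insufficient, since the identity shows a single large $t_k$ can be absorbed by pushing the two adjacent angles at $i_2$ toward $\pi/2$ while the remaining four readjust to keep the sum $2\pi$. I would therefore argue by compactness. Assuming the bound fails, choose edges $e^n$ with $|\nabla u|\to\infty$, normalise $u\equiv 0$ at one endpoint, and rescale so that the longest edge in the two incident flowers has length $1$; then pass to a limit of these finite configurations. In the limit some edges degenerate to length $0$, producing a flat, Delaunay configuration that contains a degenerate triangle, and the task is to show this cannot occur: an angle forced to $0$ at one vertex forces a neighbouring triangle in the flower to degenerate as well, and tracking this degeneration around a vertex contradicts either the angle sum $2\pi$ or the Delaunay inequality $\alpha_{ij}\le\pi$ on the offending edge. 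The delicate point is to show the Delaunay inequality is \emph{strictly} violated in the limit rather than merely saturated; the alternative I would attempt first is a purely quantitative estimate bounding $\max_k|t_k|$ in one flower in terms of $\sum_k\theta_{\angle w_ki_2w_{k+1}}=2\pi$ and the six Delaunay inequalities, starting from the two triangles adjacent to the maximal–gradient edge.
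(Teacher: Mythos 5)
Your opening reduction (ratio bound $\Leftrightarrow$ two--sided gradient bound) and your cosine identity $\cos\theta_{\angle w_ki_2w_{k+1}}=\cosh(t_k-t_{k+1})-\tfrac12 e^{t_k+t_{k+1}}$ are both correct, but the argument has a genuine gap, and it starts with the law-of-sines step. Reducing to ``every angle of every triangle lies in $[\theta_0,\pi-\theta_0]$'' replaces the lemma by a \emph{strictly stronger} statement that cannot be proved at this stage. The ratio bound neither requires nor implies angle bounds: a triangle with sides $1,1,2$ has all edge ratios in $[1/2,2]$ but angles $0,0,\pi$, and the whole paper deliberately works with generalized PL metrics in which such degenerate triangles are admitted. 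Worse, a single flat Delaunay flower can actually contain degenerate triangles: take $u_0=0$, $u_1=u_2=u_3=\ln 2$, $u_4=u_6=\ln\frac23$, $u_5=\ln\frac25$. Then every one of the six triangles is degenerate, the center angles are $\pi,\pi,0,0,0,0$ (so $K_u=0$), and one checks $\alpha_{i_0i_j}\in\{0,\pi\}$ for all $j$, so the Delaunay condition holds. Hence no argument local to one or two flowers --- which is all your blow-up/compactness scheme examines --- can produce your $\theta_0$; globally, the angle bound is essentially the conclusion of Theorem \ref{main} itself (all angles equal $\pi/3$), so that route is circular. This same example undermines the contradiction you hope to extract from the limit configuration: a flat, Delaunay configuration containing degenerate triangles is not automatically contradictory, which is exactly the difficulty you flagged (``strict versus saturated Delaunay'') and did not resolve. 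In short, the heart of the proof is missing, and the target you reduced to is unreachable.

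The second problem is your diagnosis that flatness is insufficient and Delaunay must enter. The paper's proof of Lemma \ref{ratio} uses flatness only, in a single flower, with no compactness and no degeneration analysis. Normalize $u_0=0$ at the center $i_0$; then the spokes have length $l_{i_0i_j}=e^{u_j}$ and the outer edges satisfy $l_{i_ji_{j+1}}=l_{i_0i_j}l_{i_0i_{j+1}}$. If some spoke has length at most $\epsilon_1\le\frac12$, the triangle inequality $l_{i_0i_{j+1}}\le l_{i_0i_j}+l_{i_0i_j}l_{i_0i_{j+1}}$ gives $l_{i_0i_{j+1}}\le l_{i_0i_j}/(1-l_{i_0i_j})$, so going around the flower all six spokes are small once $\epsilon_1$ is small. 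Your own identity then finishes the job: if $t_j,t_{j+1}\le -A$, then $\cos\theta_{\angle i_ji_0i_{j+1}}=\cosh(t_j-t_{j+1})-\tfrac12 e^{t_j+t_{j+1}}\ge 1-\tfrac12 e^{-2A}$, so every center angle is small and the six of them cannot sum to $2\pi$. This yields a universal lower bound on $u_j-u_0$ along every edge, i.e.\ exactly the stated ratio bound $l_{i_1i_2}\ge\epsilon_0 l_{i_0i_2}$; the two-sided bound $|\nabla u|\le M_0$ then follows by running the identical argument in the flower centered at the \emph{other} endpoint of the edge. So your correct observation that flatness at $i_2$ alone cannot exclude one large positive $t_k$ is remedied by invoking flatness at $w_k$ --- not by Delaunay, not by compactness.
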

\begin{proof}
Consider the hexagon $H=(i_0;i_1,i_2,i_3,i_4,i_5,i_6)$ centered at $i_0$ as in Figure \ref{fig-8}. Since the length ratio is invariant under the similarity. We may assume $u_0=0$. In this case, $l_{i_ji_{j+1}}=l_{i_0i_{j}}l_{i_0i_{j+1}}$. We show $l_{i_0i_{1}}$ has a universal positive lower bound. If not, for any $\ep_1\leq \frac{1}{2}$, suppose $l_{i_0i_{1}}\leq \ep_1$, then from $l_{i_0i_{2}}\leq l_{i_0i_{1}}+l_{i_0i_{1}}l_{i_0i_{2}}$, we obtain $l_{i_0i_{2}}\leq \frac{l_{i_0i_{1}}}{1-l_{i_0i_{1}}}\leq 2\ep_1$. Repeat this procedure, we may assume for any $\ep$, there is a flat PL metric on $H$ conformal to $l_0$ such that $u_0=0$ and $l_{i_0i_j}\leq \ep$ for $j=1,\cdots,6$. From the triangle inequality, we have $$\frac{l_{i_0i_{j}}}{1+l_{i_0i_{j}}}\leq l_{i_0i_{j+1}}\leq \frac{l_{i_0i_{j}}}{1-l_{i_0i_{j}}}.$$
To consider $\theta_{\angle i_ji_0i_{j+1}}$, we have
\begin{\eq}
\cos\theta_{\angle i_ji_0i_{j+1}}&=&\frac{l_{i_0i_{j}}^2+l_{i_0i_{j+1}}^2-l_{i_0i_{j}}^2l_{i_0i_{j+1}}^2}{2l_{i_0i_{j}}l_{i_0i_{j+1}}}\\
&\geq& \frac{l_{i_0i_{j}}^2+\frac{l_{i_0i_{j}}^2}{(1+l_{i_0i_{j}})^2}}{2l_{i_0i_{j}}\frac{l_{i_0i_{j}}}{1-l_{i_0i_{j}}}}
-\frac{l_{i_0i_{j}}l_{i_0i_{j+1}}}{2}\\
&=&\frac{1+\frac{1}{(1+l_{i_0i_{j}})^2}}{2}(1-l_{i_0i_{j}})-\frac{l_{i_0i_{j}}l_{i_0i_{j+1}}}{2}.
\end{\eq}
So when $l_{i_0i_{j}},l_{i_0i_{j+1}}$ approach to $0$, $\theta_{\angle i_ji_0i_{j+1}}$ approaches to $0$. It contradicts to the assumption that the PL metric is flat. Notice that $u_0=0$, so $l_{i_0i_1}=e^{u_1-u_0}$ has a universal positive lower bound implies $u_1-u_0$ has a universal lower bound. Since $\triangle i_0i_1i_2$ is arbitrary, we obtain $\nabla u$ has a universal bound.
\end{proof}

When $\nabla_1 u$ and $\nabla_{\omega}u$ are constants, one can show
\begin{lemma}\label{overlap1}
For $M,N\in \mathbb{R}$, $(M,N)\neq (0,0)$, if $\nabla_1 u\equiv M$ and $\nabla_{\omega} u\equiv N$, then there is a constant $R(M,N)$ depending on $M,N$, such that for any $i\in V$, the ball $B(i,R(M,N))$ must have an overlap of positive area.
\end{lemma}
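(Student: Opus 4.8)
The plan is to realize the flat metric by its developing map and to show that, when the conformal factor is affine, this map is essentially a complex exponential, so it wraps around and overlaps after a bounded number of steps.

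First I would reduce to a single base vertex. Since $\nabla_1 u\equiv M$ and $\nabla_\omega u\equiv N$, the factor $u$ is affine on the lattice, $u(m+n\omega)=u(0)+mM+nN$. Hence translation by any lattice vector $i=m_0+n_0\omega$ changes $u$ by the constant $\kappa=m_0M+n_0N$, so it multiplies every edge length by $e^{2\kappa}$. Consequently the developed image of $B(i,R)$ is similar to that of $B(0,R)$ for every $i$, and since similarities preserve positive-area overlaps, it suffices to produce, for some $R=R(M,N)$, an overlap inside $B(0,R)$; the same $R$ then works uniformly in $i$. (Affineness of $u$ also makes $(T,l)$ flat, exactly as in the example of Figure~\ref{fig-9}, so the developing map $\Phi\colon V\to\mathbb{C}$ is well defined.)

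Next I would extract a self-similar normal form for $\Phi$. Because the lattice translated by $1$ carries $l$ to $e^{2M}l$, the two maps $i\mapsto\Phi(i+1)$ and $e^{2M}\Phi$ develop the same metric and hence differ by a rigid motion; thus $\Phi(i+1)=A_1(\Phi(i))$ for a similarity $A_1$ of scaling factor $e^{2M}$, and likewise $\Phi(i+\omega)=A_\omega(\Phi(i))$ with factor $e^{2N}$. These similarities commute (both orders send $\Phi(i)$ to $\Phi(i+1+\omega)$, and they agree on the non-collinear set $\Phi(V)$). When $M,N\neq0$ both $A_1,A_\omega$ are genuine expansions/contractions with unique fixed points, and commuting maps of this type share a fixed point $p$; if say $M=0$ then $A_1$ cannot be a translation (commuting with the expansion $A_\omega$ would force it to be the identity, making $\Phi$ independent of the first lattice coordinate), so it is a nontrivial rotation and the same conclusion holds. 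Conjugating by the translation taking $p$ to $0$ gives $A_1(z)=\mu_1 z$, $A_\omega(z)=\mu_\omega z$ with $|\mu_1|=e^{2M}$, $|\mu_\omega|=e^{2N}$, so that
$$\Phi(m+n\omega)=p+z_0\exp\big(L(m,n)\big),\qquad L(m,n)=m\log\mu_1+n\log\mu_\omega,$$
where $z_0=\Phi(0)-p\neq0$, $\mathrm{Re}\,\log\mu_1=2M$ and $\mathrm{Re}\,\log\mu_\omega=2N$.

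The heart of the argument is then a covering-space observation. I would first check that $L\colon\mathbb{R}^2\to\mathbb{C}$ is a linear isomorphism, i.e. that $\log\mu_1,\log\mu_\omega$ are $\mathbb{R}$-linearly independent; equivalently $\det\begin{pmatrix}2M&2N\\ \arg\mu_1&\arg\mu_\omega\end{pmatrix}=2(M\arg\mu_\omega-N\arg\mu_1)\neq0$. Granting this, the map $\tilde\Phi:=L$ sends the hexagonal triangulation to an honest tiling of the $w$-plane $\mathbb{C}$ by a linear image of the hexagonal triangulation, and $\Phi=p+z_0\exp\circ\tilde\Phi$. Now fix a tile $\tau_1=L(\sigma_1)$ near the origin and consider its translate $\tau_1+2\pi i$; it has positive area and $\tau_1+2\pi i\not\subseteq\tau_1$, so its part outside $\tau_1$ is covered by the tiling and meets some other tile $\tau_2=L(\sigma_2)$, with $\sigma_2\neq\sigma_1$, in a set $U$ of positive area. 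Since $\exp(w+2\pi i)=\exp(w)$ and $\exp$ is a local diffeomorphism, $\exp(U)\subseteq\exp(\tau_1)\cap\exp(\tau_2)$ has positive area, i.e. the developed faces $\Phi(\sigma_1)$ and $\Phi(\sigma_2)$ overlap in positive area. Finally $\sigma_1,\sigma_2$ lie within $w$-distance at most $2\pi+\mathrm{diam}(\text{tile})$, hence within combinatorial distance $R(M,N)\sim\|L^{-1}\|\,(2\pi+\mathrm{diam})$ of the origin, which is the required bound.

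The main obstacle is the nondegeneracy $M\arg\mu_\omega\neq N\arg\mu_1$, i.e. that the developed vertices do not collapse onto a single logarithmic spiral. In the continuous model the developing map is $\exp(cz)$ with $\log\mu_1=c$, $\log\mu_\omega=c\omega$, and independence follows at once from $\omega=e^{i\pi/3}\notin\mathbb{R}$; in the discrete setting I expect the same conclusion from the $60^{\circ}$ symmetry of the hexagonal link, computing the rotation angles of $A_1,A_\omega$ (or relating $A_1$, $A_\omega$ and $A_{\omega-1}$ around the fundamental triangle $\{0,1,\omega\}$). Should this computation prove delicate, there is a safety net: if $\log\mu_1,\log\mu_\omega$ were dependent, all vertices would lie on one embedded logarithmic spiral, and winding the two-dimensional triangulated strip along a curve already forces positive-area overlaps within a bounded number of steps, so the conclusion survives that case as well.
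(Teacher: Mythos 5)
Your overall strategy is sound and its first half is correct; note for context that the paper itself offers no proof of this lemma, deferring entirely to Lemma 2.6 of \cite{WGS}, and your route (equivariance of the developing map under lattice translations, hence commuting similarities $A_1,A_\omega$; a common fixed point $p$; the normal form $\Phi(m+n\omega)=p+z_0\,\mu_1^m\mu_\omega^n$; reduction to a single base vertex by self-similarity) is essentially that standard one. The step ruling out $A_1=\mathrm{id}$ is easiest to justify by observing that the developed endpoints of an edge are distinct because every edge has positive length.

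The genuine gap is in the covering-space step. The identity $\Phi=p+z_0\exp\circ L$ holds \emph{only at vertices}: the developed face $\Phi(\sigma)$ is the straight Euclidean triangle spanned by the three developed vertices, whereas $\exp(L(\sigma))$ is the $\exp$-image of a straight triangle, a region bounded by arcs of logarithmic spirals. Since the tiles $L(\sigma)$ have fixed size comparable to $|\log\mu_1|,|\log\mu_\omega|$ (they are not infinitesimally small), neither region approximates or contains the other; consequently the positive-area overlap you produce between $\exp(\tau_1)$ and $\exp(\tau_2)$ gives no information about $\Phi(\sigma_1)\cap\Phi(\sigma_2)$, and the conclusion that the developed faces overlap is a non sequitur. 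Moreover, the $\mathbb{R}$-independence of $\log\mu_1,\log\mu_\omega$ is left unproven, and your ``safety net'' for the dependent (logarithmic-spiral) case is an assertion rather than an argument. Both defects can be repaired at once by replacing the $2\pi i$-translation trick with a discreteness argument: consider the homomorphism $\phi:\mathbb{Z}^2\to\mathbb{C}^*$, $(m,n)\mapsto\mu_1^m\mu_\omega^n$. Either $\phi$ has nontrivial kernel, so the developed picture is literally periodic, or $\phi$ is injective, in which case its image cannot be discrete (any discrete subgroup of $\mathbb{C}^*\cong\mathbb{R}\times S^1$ has rank at most one), so there exist $(m,n)\neq(0,0)$ with $\mu_1^m\mu_\omega^n$ arbitrarily close to $1$. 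In either case the similarity $A_1^mA_\omega^n$ displaces a fixed nondegenerate developed face $\Phi(\sigma)$ so little---relative to its diameter and to its distance from $p$---that $\Phi(\sigma)$ and the distinct face $\Phi(\sigma+m+n\omega)$ overlap in positive area; the least such $|m|+|n|$ depends only on $(M,N)$, which yields the required $R(M,N)$ and treats the spiral and non-spiral cases uniformly.
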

\begin{proof} It is proved in Lemma 2.6 of \cite{WGS}.
\end{proof}

By perturbation, we show
\begin{lemma}\label{overlap2}
For any $M,N\in \mathbb{R}$, $M\neq 0$, there exists $\ep(M)>0$ and $R(M)\in \mathbb{N}$, such that for any $i\in V$, if
$$|\nabla_1 u-M|< \ep(M),~ |\nabla_{\omega} u-N|<\ep(M) \text{ in } B(i,R(M)),$$ then the ball $B(i,R(M))$ must have an overlap of area.
\end{lemma}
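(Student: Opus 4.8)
The plan is to deduce Lemma~\ref{overlap2} from the exact statement Lemma~\ref{overlap1} by a continuity (open-condition) argument, and then to eliminate the dependence on $N$ using the explicit form of the developing map. Throughout, since overlaps and angles are invariant under similarity, I normalize $u(i)=0$; every edge length $e^{u_a+u_b}$ inside a fixed combinatorial ball is then the exponential of a sum of increments $\nabla_1 u,\nabla_\omega u$ taken along a lattice path, hence a smooth function of the finitely many increments occurring in that ball. Laying out the triangles of the ball into $\mathbb{C}$ one at a time, the developed vertex positions are smooth functions of these increments, away from degeneracy.

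First I fix $M\neq 0$ and an arbitrary $N$, and take $R=R(M,N)$ from Lemma~\ref{overlap1}. For the constant increments $(M,N)$ the developing map of $B(i,R)$ carries two triangles onto images whose interiors meet in a set of positive area. The intersection of two open triangles in positive area is an open condition on the six vertices involved, so it survives a sufficiently small perturbation of the vertex positions, hence a sufficiently small perturbation of the increments. This yields $\epsilon=\epsilon(M,N)>0$ such that $|\nabla_1 u-M|<\epsilon$ and $|\nabla_\omega u-N|<\epsilon$ on $B(i,R)$ already force an overlap of area; because $B(i,R)$ is a lattice translate of a single combinatorial ball, the resulting $\epsilon$ and $R$ do not depend on $i$.

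It remains to make $\epsilon$ and $R$ depend on $M$ alone. In the exact linear case the development is the covering map $\mathbb{C}\to\mathbb{C}\setminus\{0\}$ of Figure~\ref{fig-9}, namely (a lift of) $z\mapsto c^{-1}e^{cz}$ evaluated at the lattice points, where $c$ is fixed by $\mathrm{Re}(c)=M$ and $\mathrm{Re}(c\omega)=N$; writing $\omega=\tfrac12+\tfrac{\sqrt3}{2}i$ gives $c=M+i\,\frac{M-2N}{\sqrt3}$, so that $|c|^2=M^2+\tfrac13(M-2N)^2\geq M^2$ and hence $|c|\geq|M|>0$ uniformly in $N$. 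The overlap is produced as soon as the ball, measured in the standard unit-edge metric, extends across one period $2\pi/|c|\leq 2\pi/|M|$ of the angular coordinate $\mathrm{Im}(cz)$; thus a combinatorial radius $R(M)$ of order $2\pi/|M|$ forces the winding for every $N$. For $|N|\to\infty$ we have $|c|\to\infty$, the spiral only winds faster and the overlap appears at an even smaller radius, so the worst case over $N\in\mathbb{R}$ is confined to a bounded range of $N$; on that compact range the joint continuity of $(N,\text{increments})\mapsto(\text{vertex positions})$ together with the uniform openness of the overlap condition produce a single $\epsilon(M)>0$. Taking these $R(M)$ and $\epsilon(M)$ proves the lemma.

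The main obstacle is exactly this uniformity in $N$: the per-$(M,N)$ perturbation is routine, but one must check that as $N$ sweeps all of $\mathbb{R}$ neither $R$ blows up nor the overlap area, and with it $\epsilon$, collapses to $0$. The bound $|c|\geq|M|$ is what controls the $|N|\to\infty$ regime, since the period stays bounded and the winding only intensifies, thereby reducing the question to a compact range of $N$ on which ordinary continuity suffices. This is also why the natural hypothesis is $M\neq 0$ rather than merely $(M,N)\neq(0,0)$.
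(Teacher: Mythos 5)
Your opening two paragraphs reproduce the paper's own first step: for each fixed $(M,N)$ with $M\neq 0$, take $R(M,N)$ from Lemma~\ref{overlap1}, observe that a positive-area overlap is an open condition on the finitely many increments in the ball, and use the fact that for constant increments all balls of a given radius are similar to remove the dependence on $i$. Up to that point you and the paper agree.

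The gap is in your third paragraph, the uniformity in $N$, which you correctly identify as the crux. First, the development of the constant-increment metric is \emph{not} $z\mapsto c^{-1}e^{cz}$ with $\mathrm{Re}(c)=M$, $\mathrm{Re}(c\omega)=N$. Because all upward triangles are mutually similar and all downward triangles are mutually similar, the developed vertices form an orbit $p+B\,\sigma_1^m\sigma_\omega^n$ of two commuting spiral similarities; the moduli $|\sigma_1|,|\sigma_\omega|$ are indeed exponentials of the increments, but the rotation angles $\arg\sigma_1,\arg\sigma_\omega$ are holonomy angles determined by the angles of the two triangle shapes, and they do not coincide with $\mathrm{Im}(c),\mathrm{Im}(c\omega)$ for any complex-linear $c$ (the smooth exponential is only the continuum limit). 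Hence your period bound $2\pi/|c|\le 2\pi/|M|$, and with it the uniform radius $R(M)$ of order $2\pi/|M|$, is unsupported; in effect you are asserting an $N$-uniform quantitative strengthening of Lemma~\ref{overlap1} that the cited lemma does not provide, and deriving it from an incorrect formula. Second, the regime $|N|\to\infty$ is not one of ``faster winding'': for fixed $M\neq 0$ and $|N|$ large, the configuration $\nabla_1 u\equiv M$, $\nabla_\omega u\equiv N$ violates the triangle inequality (for instance, if $M>0$ and $N\to+\infty$, the upward triangle with side lengths $e^M,e^N,e^{M+N}$ needs $e^{-M}+e^{-N}\ge 1$, which fails; analogous inequalities fail in the other sign cases), so no such metric, and no spiral, exists there; moreover any $u$ whose increments are $\ep$-close to $(M,N)$ on even one triangle violates the $\ep$-relaxed inequality, so the hypothesis of the lemma is \emph{vacuous} for such $N$. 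This vacuity is the correct mechanism, and it is the paper's: the set $S=\{N:\ \nabla_1 u\equiv M,\ \nabla_\omega u\equiv N\ \text{consists of generalized triangles}\}$ is compact, Lemma~\ref{overlap1} applies at every $N\in S$ (this is where $M\neq 0$ is genuinely used, since it excludes the overlap-free configuration $(M,N)=(0,0)$), and a finite subcover of $S$ by the per-$N$ openness intervals yields $\ep(M)=\min_j\ep_1(M,N_j)$ and $R(M)=\max_j R(M,N_j)$, while small $\ep$ makes every $N$ outside the subcover vacuous. Your closing appeal to ``a compact range of $N$ plus continuity'' is exactly this argument, but your reduction to that compact range rests on the flawed spiral picture; replace the spiral discussion by the triangle-inequality/compactness argument and the proof is correct.
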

\pf Given $(M,N)$, $M\neq 0$, choose $R(M,N)$ as in Lemma \ref{overlap1}. For $i\in V$, since the overlap of positive area is an open condition, then there exists $\ep_1=\ep_1(M,N,i)$, such that $B(i,R(M,N))$ has an overlap of positive area. When $\nabla_1 u\equiv M,~\nabla_{\omega} u\equiv N$, all balls of radius $R$ are similar, so $\ep_1$ only depends on $M,N$. So Lemma \ref{overlap2} holds for $\ep=\ep_1$.
Fix $M$, since the set $S:=\{N:$ the configuration $\nabla_1 u\equiv M, \nabla_{\omega} u\equiv N$ consists of generalized triangles. $\}$ is a compact set. So we can choose the constant $\ep$ only depending on $M$.
\qed

Finally, by showing the following lemma, together with Lemma \ref{overlap2}, we finish the proof of Theorem \ref{main}. Under Proposition \ref{2.2}, the proof of Lemma \ref{MN} is basically similar to the proof of Lemma 2.3 in \cite{WGS}. For the convenience of the readers, we also give a proof here. From Lemma \ref{ratio}, $|\nabla_c u|$ are bounded, $c=1$ or $\omega$.
\begin{lemma}\label{MN}
Let $M=\sup \nabla_1 u$. For any $\ep>0$, $R\in\mathbb{N}$, there exists $i=i(M,\ep,R)\in V$ and $N=N(M,\ep,R)\in \mathbb{R}$ depending on $M,\ep,R$, such that $$|\nabla_1 u-M|< \ep,~ |\nabla_{\omega} u-N|<\ep \text{ in } B(i,R).$$
\end{lemma}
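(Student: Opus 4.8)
The plan is to carry out a two-stage localization, applying Proposition \ref{2.2} first in the direction $1$ and then in the direction $\omega$. Recall that by Lemma \ref{ratio} both $\nabla_1 u$ and $\nabla_\omega u$ are bounded by a universal constant, so $M=\sup\nabla_1 u$ is finite and all quantities below are bounded. Since $\nabla_1 u\leq M$ everywhere while $M$ is only a supremum (possibly not attained), I would first fix the target accuracy $\epsilon$ and radius $R$, let $\delta=\delta(\epsilon,R)$ be the constant from Proposition \ref{2.2}, and note that at any vertex $j$ with $\nabla_1 u(j)\geq M-\delta$ the free upper bound $\nabla_1 u|_{B(j,R)}\leq M$ lets Proposition \ref{2.2} (with $c=1$) yield $\nabla_1 u|_{B(j,R)}\geq M-\epsilon$, i.e. $|\nabla_1 u-M|\leq\epsilon$ on $B(j,R)$. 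The real content of the lemma is therefore to pin $\nabla_\omega u$ to a single constant $N$ on a ball where this already holds.

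To choose $N$, I would take the largest value $\nabla_\omega u$ can subsequentially approach along vertices where $\nabla_1 u$ tends to $M$; concretely, set
\[
N=\sup\Big\{\lim_k\nabla_\omega u(j_k): j_k\in V,\ \nabla_1 u(j_k)\to M\Big\},
\]
finite by Lemma \ref{ratio}. By definition there is a sequence $j_k$ with $\nabla_1 u(j_k)\to M$ and $\nabla_\omega u(j_k)\to N$. For $k$ large we have $\nabla_1 u(j_k)\geq M-\delta$, so the Stage-1 argument applied at $j_k$ shows $\nabla_1 u\to M$ \emph{uniformly} on the whole ball $B(j_k,R)$, not merely at its center.

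The key step is the uniform upper bound $\nabla_\omega u\leq N+\eta$ on $B(j_k,R)$ for large $k$, which I would prove by contradiction from the maximality of $N$: if some $p_k\in B(j_k,R)$ had $\nabla_\omega u(p_k)>N+\eta$, then, because $p_k\in B(j_k,R)$ already forces $\nabla_1 u(p_k)\to M$, a subsequential limit of $\nabla_\omega u(p_k)$ would exceed $N$ while $\nabla_1 u(p_k)\to M$, contradicting the definition of $N$. Granting this, the center values satisfy $\nabla_\omega u(j_k)\geq N-\eta\geq\big(\sup_{B(j_k,R)}\nabla_\omega u\big)-2\eta$, so a second application of Proposition \ref{2.2} in the direction $\omega$ gives $\nabla_\omega u\geq N-\epsilon$ on $B(j_k,R)$. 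Combined with the upper bound this makes $|\nabla_\omega u-N|$ small on $B(j_k,R)$; taking $i=j_k$ for $k$ large and rescaling $\epsilon,\eta$ completes the proof.

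The main obstacle is precisely this $\omega$-stage. In the $1$-direction the global supremum supplies the upper bound $\nabla_1 u\le M$ for free, but bounding $\nabla_\omega u$ from above on an entire ball is not automatic; it is here that the maximal choice of $N$, coupled with the near-constancy of $\nabla_1 u$ on the ball from Stage 1, is essential, and it is also why one must work with subsequential limits rather than attained maxima. A secondary technical point is the positivity hypothesis $M>0$ in Proposition \ref{2.2}: for the $1$-direction we may assume $\sup\nabla_1 u>0$ after a symmetry reduction on the edge directions of the hexagonal lattice, and for the $\omega$-direction the corresponding sign condition is arranged by applying Proposition \ref{2.2} along the appropriately oriented lattice direction among $\pm\omega$.
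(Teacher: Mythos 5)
Your proposal is correct in substance, but it reaches the crucial configuration needed for Proposition \ref{2.2} in the $\omega$-direction by a genuinely different mechanism than the paper. Both proofs share the two-stage structure (pin $\nabla_1 u$ near $M$ using the free global bound $\nabla_1 u\le M$, then manufacture for $\nabla_\omega u$ a point that is nearly maximal relative to an upper bound valid on its whole $R$-ball), and the second stage is where all the work lies. The paper's solution is a finite pigeonhole: fix one center $i_0$ with $\nabla_1 u(i_0)\ge M-\delta_1$, where $\delta_1=\delta(\epsilon,R_1)$, $R_1=nR$ and $n=[2M_0/\delta]+1$; let $F(k)$ be the (attained) maximum of $\nabla_\omega u$ on the nested balls $B(i_0,kR)$, $k=0,\dots,n$; by monotonicity and the bound $|F(k)|\le M_0$ some consecutive gap satisfies $F(k_0)-F(k_0-1)\le 2M_0/n\le\delta$; then a maximizer $j_0$ of $F(k_0-1)$, together with $N=F(k_0)$, works because $B(j_0,R)\subseteq B(i_0,k_0R)$ supplies the ball upper bound for free. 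Your solution replaces this by a compactness argument: $N$ is defined intrinsically as the largest subsequential limit of $\nabla_\omega u$ along sequences maximizing $\nabla_1 u$, and the ball upper bound follows from the extremality of $N$ by contradiction. The pigeonhole proof is entirely finite, uses Proposition \ref{2.2} exactly twice (with the two accuracies $\delta$ and $\delta_1$), and keeps every constant explicit; your proof needs Stage 1 applied with accuracies $\delta(\eta,R)$ for every $\eta>0$, sequential compactness, and a diagonal argument to realize $N$ as an honest limit along a single sequence (you assert this but should write it out), but in exchange $N$ acquires a canonical meaning rather than emerging from an auxiliary pigeonhole choice, and no second radius $R_1=nR$ is needed.

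One caveat, concerning your handling of the hypothesis $M>0$ in Proposition \ref{2.2}; this is the weakest part of the write-up. For the $1$-direction, passing to the direction $-1$ does not prove Lemma \ref{MN} as stated: $\sup\nabla_{-1}u=-\inf\nabla_1 u$, so that reduction would pin $\nabla_1 u$ near its infimum, not near $M=\sup\nabla_1 u$ (in the paper this is harmless only because the lemma is invoked in the proof of the main theorem with $M>0$). For the $\omega$-direction your $\pm\omega$ trick does work when the target is nonzero, since the sup-limit in direction $-\omega$ equals minus the inf-limit in direction $\omega$ (Stage 1 makes the class of maximizing sequences invariant under unit shifts), so at least one of the two directions has a positive target unless the sup- and inf-limits both vanish; but that degenerate case is left untreated (there the conclusion holds with $N=0$ directly from your maximality upper bound applied in both directions, with no appeal to Proposition \ref{2.2}). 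The cleaner resolution is to note that positivity of $M$ is never used in the proof of Proposition \ref{2.2}: the induction via Corollary \ref{hex2} and quasi-harmonicity is insensitive to the sign of $M$, and indeed the paper's own proof of Lemma \ref{MN} applies Proposition \ref{2.2} to the value $N=F(k_0)$, which can have either sign.
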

\pf Choose $\delta=\delta(\ep,R)$ as in Proposition \ref{2.2}. Let $n=[\frac{2M_0}{\delta}]+1$. Let $R_1=nR$. Choose $\delta_1=\delta_1(\ep,R_1)$ as in Proposition \ref{2.2}. Since $M=\sup \nabla_1 u$, we can choose $i_0\in V$ such that $\nabla_1 u(i_0)\geq M-\delta_1$, which implies $\nabla_1 u|_{B(i,R_1)}\geq M-\epsilon$ from Proposition \ref{2.2}.

From Lemma \ref{ratio}, we suppose $|\nabla_{\omega}u|\leq M_0$.  Let $F(k)$ be the maximum of $\nabla_{\omega}u$ in $B(i_0,kR)$ for $k=0,1,\cdots,n$. Then $$-M_0\leq F(0)\leq F(1)\leq\cdots\leq F(n)\leq M_0.$$ Hence there exists $k_0\in\{1,\cdots,n\}$ such that $$F(k_0)-F(k_0-1)\leq \frac{2M_0}{n}\leq \delta.$$ Suppose $\nabla_{\omega}u(j_0)=F(k_0-1)$ for some $j_0\in B\big(i_0,(k_0-1)R\big)$. Let $N=F(k_0)$, $i=j_0$. Then $\nabla_{\omega}u(i)\geq N-\delta$. Since $B(i,R)\subseteq B(j_0,k_0 R)$, we have $\nabla_{\omega}u|_{B(i,R)}\leq N$. Applying Proposition \ref{2.2}, we obtain $\nabla_{\omega}u|_{B(i,R)}\geq N-\ep$. So $|\nabla_{\omega}u-N|\leq \ep$ in $B(i,R)$. By the definition of $\delta_1$, from Proposition \ref{2.2} we have $\nabla_1 u|_{B(i_0,R_1)}\geq M-\ep$, which implies $\nabla_{1}u|_{B(i,R)}\geq M-\ep$ since $B(i,R)\subseteq B(i_0,R_1)$. So $|\nabla_{1}u-M|\leq \ep$ in $B(i,R)$. We finish the proof.
\qed


\emph{Proof of Theorem \ref{main}.} If $u$ is not constant, we may assume $M=\sup \nabla_1 u>0$. Then applying Lemma \ref{overlap2} for this $M$, we obtain $\ep$ and $R$ depending on $M$. For $M,\ep,R$, we apply Lemma \ref{MN} to obtain $i,N$. Since $i,N$ are arbitrary in Lemma \ref{overlap2}, we see that $B(i,R)$ has an overlap of positive area, which contradicts to the completeness of the PL metric.
\qed

\subsection{Proof of Lemma \ref{mp}}\label{p1}

In this section, we prove Lemma \ref{mp}. As in Section \ref{sec-outline-proof}, let $H=H(i_0;i_1,i_2,i_3,i_4,i_5,i_6)$ be a hexagon of center $i_0$ with PL metric $l_0\equiv 1$, see Figure \ref{fig-8}. Let $l$ be a generalized metric, $u$ be the conformal factor. Denote $l=u*l_0$. Suppose the curvature $K_u$ at $i_0$ is zero.

First, we show a lemma to avoid the degeneracy of the triangles. Roughly speaking, the degeneracy of a smaller triangle can be controlled by the degeneracy of a bigger triangle.
\begin{lemma}\label{tri}
Suppose $\{\bar{l}_1,\bar{l}_2,\bar{l}_1\bar{l}_2\}$ forms a generalized triangle. Suppose $l_1,l_2>0$ and $l_1\leq \bar{l}_1,~l_2\leq \bar{l}_2.$ Then

(1) Either $l_1+l_2> l_1l_2$, or $l_1=\bar{l}_1,~l_2=\bar{l}_2.$

(2) if in addition $l_2=\bar{l}_2$, then either $l_2+l_1l_2> l_1$ or $l_1=\bar{l}_1$.
\end{lemma}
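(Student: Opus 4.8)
The plan is to treat both parts as statements about the sign of a single ``triangle-inequality defect'' as the two short sides shrink from $(\bar l_1,\bar l_2)$ to $(l_1,l_2)$. Writing out what it means for $\{\bar l_1,\bar l_2,\bar l_1\bar l_2\}$ to be a generalized triangle, I record the three inequalities
\[
\bar l_1+\bar l_2\ge \bar l_1\bar l_2,\qquad \bar l_2+\bar l_1\bar l_2\ge \bar l_1,\qquad \bar l_1+\bar l_1\bar l_2\ge \bar l_2,
\]
and I will only need the first for (1) and the second for (2). For (1) the relevant defect is $f(x,y)=x+y-xy$, whose positivity is exactly the desired strict inequality $l_1+l_2>l_1l_2$; for (2), under the extra hypothesis $l_2=\bar l_2$, the relevant defect is the one-variable function $g(x)=\bar l_2+x\bar l_2-x$.

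For Part (1) the key observation is the factorization $f(x,y)=1-(1-x)(1-y)$, which turns the sign question into whether $(1-l_1)(1-l_2)<1$. I would then split into cases according to the signs of $1-l_1$ and $1-l_2$. When at least one of $l_1,l_2$ lies in $(0,1]$, a direct sign count (each factor lies in $[0,1)$, or one factor is nonpositive) gives $(1-l_1)(1-l_2)<1$, hence $f(l_1,l_2)>0$, with no appeal to the bar-triangle needed. The only remaining case is $l_1,l_2>1$, which forces $\bar l_1,\bar l_2>1$; there $\partial_x f=1-y<0$ and $\partial_y f=1-x<0$, so $f$ is strictly decreasing in each variable on $(1,\infty)^2$. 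Since $(l_1,l_2)\le(\bar l_1,\bar l_2)$ coordinatewise, this yields $f(l_1,l_2)\ge f(\bar l_1,\bar l_2)\ge 0$, with the first inequality strict unless $(l_1,l_2)=(\bar l_1,\bar l_2)$. This is exactly the claimed dichotomy.

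For Part (2), with $l_2=\bar l_2$ fixed, the defect $g(x)=\bar l_2+(\bar l_2-1)x$ is affine in $x$, and everything hinges on the sign of $\bar l_2-1$. If $\bar l_2\ge 1$, then $g(l_1)\ge \bar l_2\ge 1>0$ outright, since $l_1>0$, so the first alternative holds unconditionally. If $\bar l_2<1$, then $g$ is strictly decreasing, so $g(l_1)\ge g(\bar l_1)\ge 0$ using the second bar-inequality, with strictness unless $l_1=\bar l_1$; this produces the stated ``or'' alternative.

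Neither step is deep; the one place that needs care --- and the main obstacle in the sense of being easy to get backwards --- is matching the direction of monotonicity to the direction in which the sides shrink, so that $f(l_1,l_2)\ge f(\bar l_1,\bar l_2)$ points the right way, together with carefully tracking strictness so as to recover the exact ``or'' alternatives rather than merely non-strict conclusions.
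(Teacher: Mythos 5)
Your proposal is correct and is essentially the paper's own argument: part (1) rests on the factorization $l_1l_2-l_1-l_2=(l_1-1)(l_2-1)-1$ plus coordinatewise monotonicity in the regime $l_1,l_2>1$ (the case where some $l_i\le 1$ being immediate), and part (2) on the sign of $\bar{l}_2-1$ and the resulting monotone comparison of $l_1$ with $\bar{l}_1$. The only difference is presentational: the paper phrases both parts as a squeeze to equality under the negated hypothesis, e.g.\ $0\ge(\bar{l}_1-1)(\bar{l}_2-1)-1\ge(l_1-1)(l_2-1)-1\ge 0$, whereas you argue directly with strictness tracking.
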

\begin{proof} To show (1), if $l_1+l_2\leq l_1l_2$, we have $l_1,l_2> 1$ and
\begin{eqnarray*}
0\geq \bar{l}_{1}\bar{l}_{2}-\bar{l}_{1}-\bar{l}_{2}=(\bar{l}_{1}-1)(\bar{l}_{2}-1)-1
\geq(l_1-1)(l_2-1)-1\geq 0.
\end{eqnarray*}
So $l_1=\bar{l}_1,~l_2=\bar{l}_2.$

To show (2), if $l_2+l_1l_2\leq l_1$, we have $\bar{l}_{2}=l_2< 1$ and
$$\bar{l}_{1}\geq l_1\geq\frac{l_2}{1-l_2}=\frac{\bar{l}_{2}}{1-\bar{l}_{2}}\geq \bar{l}_1.$$
So $l_1=\bar{l}_1.$
\end{proof}


The following lemma is the calculation of the derivative of the angle function with respect to the PL conformal factor $u$.
\begin{lemma}\label{basic}
Let $\triangle ijk$ be a triangle with PL metric $u*l_0$, where $l_0\equiv 1$ and the conformal factor $u=(u_i,u_j,u_k)$. Let $\theta_i,\theta_j,\theta_k$ be the angle at the vertex $i,j,k$ respectively. Then
$$\frac{\p \theta_i}{\p u_j}=\cot \theta_k,\quad\frac{\p \theta_i}{\p u_k}=\cot \theta_j,\quad\frac{\p \theta_i}{\p u_i}=-\cot \theta_j-\cot \theta_k.$$
\end{lemma}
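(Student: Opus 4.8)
The plan is to compute the derivatives by first writing down an explicit formula for $\cos\theta_i$ in terms of the three edge lengths, then differentiating with respect to $u_j$ (and by symmetry $u_k$), and finally using $\partial\theta_i/\partial u_i = -\partial\theta_i/\partial u_j - \partial\theta_i/\partial u_k$ together with the angle sum relation to handle the diagonal derivative. Recall that with $l_0\equiv 1$ the edge lengths are $l_{jk}=e^{u_j+u_k}$, $l_{ik}=e^{u_i+u_k}$, $l_{ij}=e^{u_i+u_j}$, so the law of cosines at vertex $i$ reads
\begin{equation*}
\cos\theta_i = \frac{l_{ij}^2 + l_{ik}^2 - l_{jk}^2}{2\, l_{ij}\, l_{ik}}.
\end{equation*}
The key observation is that each $l_{ab}$ depends on $u_j$ in a simple exponential way, so $\partial l_{ab}/\partial u_j = l_{ab}$ when $j\in\{a,b\}$ and $0$ otherwise; this makes the differentiation mechanical.

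First I would differentiate $\cos\theta_i$ with respect to $u_j$. Since $l_{ij}$ and $l_{jk}$ each scale like $e^{u_j}$ in the $u_j$ variable while $l_{ik}$ is independent of $u_j$, one gets $\partial l_{ij}/\partial u_j = l_{ij}$ and $\partial l_{jk}/\partial u_j = l_{jk}$. Carrying this through the quotient rule and using $-\sin\theta_i\,(\partial\theta_i/\partial u_j)$ on the left-hand side gives, after simplification, an expression for $\partial\theta_i/\partial u_j$ purely in terms of the edge lengths. I would then recognize the resulting combination as $\cot\theta_k$ by invoking the standard trigonometric identity $\cot\theta_k = \tfrac{l_{ik}^2 + l_{jk}^2 - l_{ij}^2}{4\,\mathrm{Area}}$ and $\sin\theta_i = \tfrac{2\,\mathrm{Area}}{l_{ij}\,l_{ik}}$, so that the area factors cancel cleanly. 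The derivative $\partial\theta_i/\partial u_k = \cot\theta_j$ then follows by interchanging the roles of $j$ and $k$, which is a symmetry of the setup.

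For the diagonal term, the cleanest route is to differentiate the constraint $\theta_i+\theta_j+\theta_k=\pi$, but since all three angles vary, it is more direct to note that scaling all of $u_i,u_j,u_k$ by the same constant is a similarity of the triangle and hence leaves every angle fixed; differentiating $\theta_i$ along this direction gives $\partial\theta_i/\partial u_i + \partial\theta_i/\partial u_j + \partial\theta_i/\partial u_k = 0$, and substituting the two values just computed yields $\partial\theta_i/\partial u_i = -\cot\theta_j - \cot\theta_k$. This scaling-invariance argument is the slickest way to get the third identity without a separate computation.

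The main obstacle is the bookkeeping in the quotient-rule differentiation and the algebraic step of rewriting the raw output as $\cot\theta_k$. One must be careful to express everything in a symmetric form where the triangle area (equivalently $\sin\theta_i$) appears explicitly, so that the $\sin\theta_i$ coming from $\frac{d}{du_j}\cos\theta_i = -\sin\theta_i\,\partial_{u_j}\theta_i$ cancels against the area in the denominator of the cotangent. I would double-check the sign and the correct pairing (that $\partial\theta_i/\partial u_j$ gives $\cot\theta_k$, the angle at the \emph{opposite} labelled vertex, not $\cot\theta_j$) by testing on the equilateral case where all angles equal $\pi/3$ and all derivatives should be consistent with $\cot(\pi/3)=1/\sqrt{3}$.
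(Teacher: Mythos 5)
Your proposal is correct: the quotient-rule differentiation of the law of cosines, combined with the identities $\sin\theta_i = 2\,\mathrm{Area}/(l_{ij}l_{ik})$ and $\cot\theta_k = (l_{ik}^2+l_{jk}^2-l_{ij}^2)/(4\,\mathrm{Area})$, does yield $\partial\theta_i/\partial u_j=\cot\theta_k$, and the scaling-invariance argument for the diagonal term is valid. The paper itself gives no details, stating only that the lemma follows ``from direct calculation or see \cite{Luo}'', so your computation is precisely the direct calculation the paper outsources.
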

\pf It is from direct calculation or see \cite{Luo}.\qed

As a corollary, we obtain the derivative of the curvature function.
\begin{corollary}\label{derivative}
For $u\in \mc{T}$, $\frac{\partial K_u}{\partial u_j}=-(\cot \theta_{\angle i_0i_{j+1}i_j}+\cot \theta_{\angle i_0i_{j-1}i_j})$. In particular, for $u\in \mc{D}$, $\frac{\partial K_u}{\partial u_j}\leq 0$.
\end{corollary}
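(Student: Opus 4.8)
The plan is to differentiate the curvature directly via Lemma \ref{basic}. Since $K_u = 2\pi - \theta_u$ with $\theta_u = \sum_{m=1}^{6} \theta_{\angle i_m i_0 i_{m+1}}$, I have $\frac{\p K_u}{\p u_j} = -\frac{\p \theta_u}{\p u_j}$, so it suffices to compute $\frac{\p \theta_u}{\p u_j}$. First I would observe that among the six angles at $i_0$ summing to $\theta_u$, only two depend on the conformal factor $u_j$ at the vertex $i_j$, namely $\theta_{\angle i_{j-1} i_0 i_j}$ and $\theta_{\angle i_j i_0 i_{j+1}}$, belonging to the two triangles $\triangle i_0 i_{j-1} i_j$ and $\triangle i_0 i_j i_{j+1}$ incident to $i_j$; the remaining four triangles do not contain $i_j$ as a vertex, so their angles at $i_0$ are unaffected.

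Next I would apply Lemma \ref{basic} to each of these two triangles. For $\triangle i_0 i_{j-1} i_j$ the angle under consideration is the one at $i_0$, and differentiating with respect to the factor at $i_j$ produces the cotangent of the angle at the remaining vertex $i_{j-1}$, i.e. $\cot \theta_{\angle i_0 i_{j-1} i_j}$; similarly, for $\triangle i_0 i_j i_{j+1}$ I obtain $\cot \theta_{\angle i_0 i_{j+1} i_j}$. Summing gives $\frac{\p \theta_u}{\p u_j} = \cot \theta_{\angle i_0 i_{j-1} i_j} + \cot \theta_{\angle i_0 i_{j+1} i_j}$, and hence the claimed formula $\frac{\p K_u}{\p u_j} = -(\cot \theta_{\angle i_0 i_{j+1} i_j} + \cot \theta_{\angle i_0 i_{j-1} i_j})$.

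For the inequality on $\mc{D}$, set $\beta = \theta_{\angle i_0 i_{j-1} i_j}$ and $\gamma = \theta_{\angle i_0 i_{j+1} i_j}$, which are exactly the two summands of $\alpha_{i_0 i_j}$. Using the addition formula I would rewrite $\cot \beta + \cot \gamma = \frac{\sin(\beta + \gamma)}{\sin \beta \sin \gamma}$. Since $\beta, \gamma \in (0,\pi)$ the denominator is positive, and the Delaunay condition $\alpha_{i_0 i_j} = \beta + \gamma \leq \pi$ forces $\sin(\beta + \gamma) \geq 0$; therefore the sum of cotangents is nonnegative and $\frac{\p K_u}{\p u_j} \leq 0$.

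There is no serious obstacle here, as this is a direct corollary of Lemma \ref{basic}; the only points requiring care are the bookkeeping of which triangles contribute to the derivative and the degenerate configurations in $\mc{T}$ where an angle may equal $0$ or $\pi$ and a cotangent blows up. I would handle the latter by restricting the differentiation to the locus where the two relevant triangles are nondegenerate, which is where the partial derivative is defined; the sign statement then propagates to the boundary by continuity of the angle functions extended to the degenerate case.
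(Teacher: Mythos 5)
Your proof is correct and is essentially the paper's intended argument: the paper states this result without proof as an immediate consequence of Lemma \ref{basic}, and your computation---identifying the two triangles incident to $i_j$ as the only contributors, applying Lemma \ref{basic} to each, and deducing the sign from $\cot\beta+\cot\gamma=\sin(\beta+\gamma)/(\sin\beta\sin\gamma)$ together with the Delaunay condition $\beta+\gamma\leq\pi$---is exactly that consequence. Your additional care about degenerate triangles in $\mc{T}$, where a cotangent blows up and the derivative is only defined on the nondegenerate locus, is a minor refinement beyond what the paper records.
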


Now we prove the maximum principle Lemma \ref{mp} for general $n$. The notations $\mc{T}$ and $\mc{D}$ are similarly defined as in Section \ref{sec-outline-proof} just replacing $6$ with $n$. Let $\bar{u},\underline{u}\in \mc{D}$. Suppose $\bar{u}\geq \underline{u}$. Denote
$$\mc{T}_{\bar{u},\underline{u}}=\{u\in \mc{T}: \underline{u}\leq u\leq \bar{u} \},\qquad \mc{D}_{\bar{u},\underline{u}}=\{u\in \mc{D}: \underline{u}\leq u\leq \bar{u} \}.$$
Then $\mc{D}_{\bar{u},\underline{u}}$ is clearly bounded and closed.
\begin{lemma}\label{key}(Lemma \ref{mp} for general $n$)
Let $\bar{u},\underline{u}\in \mc{D}$ with $\bar{u}_0=\underline{u}_0=0$. If $K_{\underline{u}}=K_{\bar{u}}=0$ and $\bar{u}\geq \underline{u}$, then $\bar{u}=\underline{u}$.
\end{lemma}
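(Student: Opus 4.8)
The plan is to set up a connectedness/continuation argument in the compact set $\mc{D}_{\bar{u},\underline{u}}$, using Corollary \ref{derivative} as the infinitesimal driving force. The key point, as the second remark after Lemma \ref{mp} emphasizes, is that we cannot simply connect $\underline{u}$ to $\bar{u}$ by an axis-parallel broken path staying inside $\mc{D}$, so we must argue more carefully. I would proceed by contradiction: suppose $\bar{u}\neq\underline{u}$, so the set $S=\{j:\bar{u}_j>\underline{u}_j\}$ is nonempty (and $0\notin S$ since $\bar u_0=\underline u_0=0$). The idea is to start at $\underline{u}$ and increase the coordinates in $S$ one at a time (or all together along the direction $\bar u-\underline u$), tracking the curvature $K_u$ at the center $i_0$, and to show that $K$ must \emph{strictly decrease} somewhere along the way, contradicting $K_{\underline{u}}=K_{\bar{u}}=0$.

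Concretely, I would consider the straight segment $u(t)=(1-t)\underline{u}+t\bar{u}$, $t\in[0,1]$, or a monotone coordinate-by-coordinate increase, and examine $\frac{d}{dt}K_{u(t)}=\sum_j \frac{\partial K_u}{\partial u_j}(\bar u_j-\underline u_j)$. By Corollary \ref{derivative}, on $\mc{D}$ each term satisfies $\frac{\partial K_u}{\partial u_j}=-(\cot\theta_{\angle i_0 i_{j+1}i_j}+\cot\theta_{\angle i_0 i_{j-1}i_j})\le 0$, since the Delaunay condition $\alpha_{i_0i_j}\le\pi$ forces these opposite-angle cotangent sums to be nonnegative. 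Thus $K_{u(t)}$ is nonincreasing \emph{as long as the path stays in $\mc{D}$}. Since the endpoints have equal curvature $0$, we would conclude $\frac{d}{dt}K_{u(t)}\equiv 0$, which forces all the relevant cotangents to vanish, i.e. the opposite angles are all $\pi/2$ along the path — and then a geometric rigidity argument (a sum of six right angles at $i_0$ equalling $2\pi$ together with the degeneracy analysis) pins down $\bar u=\underline u$.

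The main obstacle, and the reason Lemma \ref{tri} is proved first, is precisely that the straight segment (or coordinate path) need not remain in $\mc{D}$, nor even in $\mc{T}$: increasing some $u_j$ while holding others fixed can push a triangle through degeneracy or violate the Delaunay inequality, so the naive derivative computation is not licensed globally. I would use Lemma \ref{tri} to control degeneration: part (1) says that if $l_1\le\bar l_1$, $l_2\le\bar l_2$ and the larger triple is a generalized triangle, then the smaller triple is a genuine (nondegenerate) triangle unless the lengths coincide, so moving \emph{downward} from $\bar u$ preserves nondegeneracy; this suggests running the continuation from $\bar{u}$ downward rather than from $\underline{u}$ upward. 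The real work is to show that along the chosen monotone path the configuration stays Delaunay (or to localize to the region where it does and argue that $K$ cannot return to $0$ without the path being trivial). I would handle this by a maximal-interval argument: let $t^*$ be the supremum of times for which $u(t)\in\mc{D}_{\bar u,\underline u}$; on $[0,t^*)$ the monotonicity of $K$ holds, and I would analyze the boundary behavior at $t^*$ using the continuity of angles on generalized triangles (guaranteed by the degenerate-case extension stated in the excerpt) together with Lemma \ref{tri} to rule out exit through the degenerate or anti-Delaunay boundary.

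I expect the decisive step to be the rigidity in the equality case: extracting from $\frac{d}{dt}K_{u(t)}\equiv 0$ that every contributing $\frac{\partial K_u}{\partial u_j}$ vanishes on a coordinate where $\bar u_j>\underline u_j$, and then converting ``all opposite angles are right angles'' into an actual coincidence of the two metrics. Since a flat hexagon at $i_0$ has $\sum_{j=1}^n\theta_{\angle i_j i_0 i_{j+1}}=2\pi$, the constraint that the central angles are rigidly determined, combined with Lemma \ref{basic} governing how each central angle responds to the $u_j$, should force the perturbation $\bar u-\underline u$ to vanish. The technical care needed to make the interpolation stay admissible, rather than the final algebra, is where I anticipate the proof spending most of its effort.
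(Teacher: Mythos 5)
Your proposal correctly identifies the crux --- the monotone path from $\underline{u}$ to $\bar{u}$ need not stay in $\mc{D}$ --- but it does not actually resolve it, and it contains an error at the step you call decisive. First, the equality case of Corollary \ref{derivative} is misread: $\frac{\partial K_u}{\partial u_j}=-(\cot\theta_{\angle i_0i_{j+1}i_j}+\cot\theta_{\angle i_0i_{j-1}i_j})$ vanishes precisely when $\theta_{\angle i_0i_{j+1}i_j}+\theta_{\angle i_0i_{j-1}i_j}=\pi$, i.e.\ when $\alpha_{i_0i_j}=\pi$ (the cocircular, Delaunay-boundary case), not when both opposite angles equal $\pi/2$. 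So from $\frac{d}{dt}K_{u(t)}\equiv 0$ you cannot conclude that ``all opposite angles are right angles''; you can only conclude that every coordinate being pushed sits exactly on the Delaunay boundary. That is not a rigid configuration from which $\bar u=\underline u$ follows by easy geometry --- it is exactly the hard configuration that the whole proof must exclude.

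Second, your maximal-interval argument cannot ``rule out exit through the anti-Delaunay boundary,'' because such exit can genuinely happen: $\mc{D}$ is cut out by the inequalities $\alpha_{i_0i_j}\le\pi$, which are not preserved along the straight segment or a coordinate-wise increase, and Lemma \ref{tri} gives no help there --- it only controls exit through the boundary of $\mc{T}$ (triangle degeneration), not through the hypersurfaces $\{\alpha_{i_0i_j}=\pi\}$ inside $\mc{T}$. The paper's proof is structurally different at this point. It first reduces the lemma, by a compactness/infimum argument in $\mc{D}_{\bar u,\underline u}$, to a one-step strict-decrease statement (Claim 0: from any $u\neq\bar u$ with $K_u\le 0$ produce $v\ge u$ in $\mc{D}_{\bar u,\underline u}$ with $K_v<K_u$). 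The easy case is when some $j$ has $u_j<\bar u_j$ and $\alpha_{i_0i_j}<\pi$ strictly (Claim 1, a small coordinate push). The essential content (Claim 2) is to show the blocked configuration --- every deficient coordinate having $\alpha_{i_0i_j}\ge\pi$ --- is impossible; this is done by an angle count (forcing at least two coordinates with $u_j=\bar u_j$), followed by a bespoke flow run in $\mc{T}_{\bar u,\underline u}$, i.e.\ \emph{outside} $\mc{D}$, whose velocity field is chosen so that the intermediate $\alpha$'s are held constant (at values $\ge\pi$) while one boundary $\alpha$ strictly increases past $\pi$; degeneration along the flow is excluded by Lemma \ref{tri}, so the flow can only stop by hitting $u_j=\bar u_j$, and iterating this yields a contradiction with $\bar u\in\mc{D}$. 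Your outline contains no substitute for this mechanism --- the phrase ``analyze the boundary behavior at $t^*$'' is precisely where the entire difficulty lives --- so the proof as proposed does not go through.
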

\begin{proof}

Claim 0: Let $u\in\mc{D}_{\bar{u},\underline{u}}$, $u\neq \bar{u}$, $K_u\leq 0$. Then there exists $v\in\mc{D}_{\bar{u},\underline{u}}$, such that $v\geq u$ and $K_{v}<K_{u}$.

We show Lemma \ref{key} under Claim 0. We first apply Claim 0 to $u=\underline{u}$. Then there exists $\tilde{v}\in\mc{D}_{\bar{u},\underline{u}}$, such that $\ti{v}\geq \underline{u}$ and $K_{\ti{v}}<K_{\underline{u}}=0$. Denote
$$S=\{u\in\mc{D}_{\bar{u},\underline{u}}:~K_u\leq K_{\ti{v}}\}.$$
For $u\in S$, denote
$$||\bar{u}-u||_1=\sum\limits_{i=1}^{n}|\bar{u}_i-u_i|.$$
Set $A=\inf\limits_{u\in S}||\bar{u}-u||_1.$ We suppose $u^i\in S$, $||\bar{u}-u^i||_1\rightarrow A$ and $u^i\rightarrow u^{\infty}.$ Since $\mc{D}_{\bar{u},\underline{u}}$ is closed and $K_u$ is continuous in $u$, $S$ is also closed. So $u^{\infty}\in S.$ If $A=0$, then $u^{\infty}=\bar{u}$. Then
$$0>K_{\ti{v}}\geq K_{u^{\infty}}=K_{\bar{u}}=0.$$
Contradiction. If $A>0$, we apply Claim 0 to $u=u^{\infty}$. Then there exists $v\in\mc{D}_{\bar{u},\underline{u}}$, such that $v\geq u^{\infty}$ and $K_{v}<K_{u^{\infty}}$. So $v\in S$ and $||\bar{u}-v||_1<||\bar{u}-u^{\infty}||_1=A$. Contradiction. Now we prove Claim 0.

Claim 1: For $u\in\mc{D}_{\bar{u},\underline{u}}$ $u\not=\bar{u},K_u\le 0$, if $u_j<\bar{u}_j$ and $\alpha_{i_0i_j}<\pi$ for some $j$, then Claim 0 holds.

we consider
$$v=(u_1,\cdots,u_j+\ep,\cdots,u_n), ~\ep \text{ small enough}.$$
If the triangles $\triangle i_0i_ji_{j+1}$ and $\triangle i_0i_ji_{j-1}$ are both non-degenerate with respect to $v$, then
from Lemma \ref{basic}, we have
\begin{eqnarray*}
\frac{\partial \theta_{\angle i_0i_ji_{j+1}}}{\partial u_j}&=&-(\cot \theta_{\angle i_ji_0i_{j+1}}+\cot\theta_{\angle i_ji_{j+1}i_0})< 0,\\
\frac{\partial \theta_{\angle i_0i_ji_{j-1}}}{\partial u_j}&=&-(\cot \theta_{\angle i_ji_0i_{j-1}}+\cot\theta_{\angle i_ji_{j-1}i_0})< 0.
\end{eqnarray*}
Since $\alpha_{i_0i_j}<\pi$, 
we see for $\ep$ small enough, $v\in\mc{D}_{\bar{u},\underline{u}}$. From Lemma \ref{basic} and $\alpha_{i_0i_j}<\pi$,
$$\frac{\partial K_u}{\partial u_j}=-(\cot \theta_{\angle i_0i_{j+1}i_j}+\cot \theta_{\angle i_0i_{j-1}i_j})<0.$$ Let $\ep$ small enough, we obtain the desired result.

Suppose one of the triangle, say $\triangle i_0i_ji_{j+1}$ degenerates with respect to $u$, while the other does not. Recall $u_0=\bar{u}_0=0$, so $l_{i_0i_j}=e^{u_j}$, $l_{i_0i_{j}}l_{i_0i_{j+1}}=l_{i_ji_{j+1}}$. From the part (1) of Lemma \ref{tri}, noticing $\bar{u}\geq u$ and $\bar{u}_j>u_j$, we rule out $l_{i_0i_j}l_{i_0i_{j+1}}=l_{i_0i_j}+l_{i_0i_{j+1}}$. From $\alpha_{i_0i_j}<\pi$, we rule out $l_{i_0i_{j+1}}+l_{i_0i_j}l_{i_0i_{j+1}}=l_{i_0i_{j}}$. So the only possibility is $l_{i_0i_j}+l_{i_0i_j}l_{i_0i_{j+1}}=l_{i_0i_{j+1}}$. In this case, increase $l_{i_0i_j}$ slightly, then both triangle would become non-degenerate and $K_u$ would become negative. Suppose both $\triangle i_0i_ji_{j+1}$ and $\triangle i_0i_ji_{j-1}$  degenerate, obviously, increasing $l_{i_0i_j}$ works for both.  Then as in the non-degenerate case, we finish the proof of Claim 1.

Claim 2: There exists $j\in\{1,\cdots,n\}$ such that $u_j<\bar{u}_j$ and $\alpha_{i_0i_j}<\pi$.

Now we prove Claim 2 to finish the whole proof. If Claim 2 fails, we may suppose that $u_j<\bar{u}_j$ implies $\alpha_{i_0i_j}\geq\pi$ for every $j$. We show the configuration is impossible. Let $S=\{u_i: ~u_i<\bar{u}_i\}$, $k=\#S$. Then together with $0\geq K_u=2\pi-\theta_u,$
$$n\pi=\text{the sum of all angles of the n triangles}\geq k\cdot \pi+\theta_{u}\geq (k+2)\pi.$$
So $k\leq n-2$. Then there are at least two edges satisfying $u_j=\bar{u}_j$. Consider the configuration $(i_0;i_{s},\cdots,i_{s+l})$, i.e. the triangles $\triangle i_{0}i_{s}i_{s+1}$, $\cdots$, $\triangle i_{0}i_{s+l-1}i_{s+l}$, satisfying $u_{s}=\bar{u}_{s},u_{s+l}=\bar{u}_{s+l}$, for some $l\geq 1$ and $u_{s+a}<\bar{u}_{s+a},a=1,\cdots,l-1$. 
We show the configuration is impossible. 
For simplicity we assume
\begin{equation*}
u_1=\bar{u}_1,~u_2<\bar{u}_2,\cdots,~u_{m-1}<\bar{u}_{m-1},~u_{m}=\bar{u}_m,\qquad \alpha_{i_0i_{2}},\cdots,\alpha_{i_0i_{m-1}}\geq\pi.
\end{equation*}

The idea is to construct a flow $u^t$ in $\mc{T}_{\bar{u},\underline{u}}$ (in fact the restriction on $(i_0;i_{1},\cdots,i_{m})$), satisfying the following conditions (we omit the superscript $t$ if there is no confusion):\\
(1) $u_2,\cdots,u_{m-1}$ are increasing and $u_1,u_{m}$ are fixed;\\
(2) $\alpha_{i_0i_2},\cdots,\alpha_{i_0i_{m-2}}$ are invariant and $\alpha_{i_0i_{m-1}}$ is increasing.\\
This flow will eventually touch the boundary of $\mc{T}_{\bar{u},\underline{u}}$. We can rule out the possibility that some triangles degenerate. So it must happen $u_j=\bar{u}_j$ for some $j$ when the flow stops. Then repeat this procedure, we obtain $u_j=\bar{u}_j$ for $j=1,\cdots,n$. Then it is a contradiction since $\bar{\alpha}_{i_0i_{m-1}}\leq\pi$ while $\alpha_{i_0i_{m-1}}>\pi$.


Suppose the configuration happens. For non-degenerate triangles, denote
\begin{eqnarray*}
A_j&=&\cot \theta_{\angle i_ji_0i_{j-1}}+\cot \theta_{\angle i_ji_0i_{j+1}},\\
B_j&=&\cot \theta_{\angle i_ji_0i_{j-1}}+\cot \theta_{\angle i_ji_{j-1}i_{0}},\\
C_j&=&\cot \theta_{\angle i_ji_0i_{j+1}}+\cot \theta_{\angle i_ji_{j+1}i_{0}}.
\end{eqnarray*}
Notice that $B_j,C_j>0$. And if $\alpha_{i_0i_j}\geq \pi$, then $A_j\geq B_j+C_j$ .
We consider the following flow $u^t=(u_1^t,\cdots,u_m^t)$.
\begin{eqnarray*}
\frac{d}{dt}u^t_i&=&X_i,\quad i=1,\cdots, m,\\
u_i^0&=&u_i,\quad i=1,\cdots, m,
\end{eqnarray*}
where $X_1,\cdots,X_{m}$ is the solution to the following equation system
\begin{eqnarray*}
X_1&=&0,\\
X_2&=&1,\\
A_2X_2-B_3X_3&=&0,\\
A_{s}X_{s}-B_{s+1}X_{s+1}-C_{s-1}X_{s-1}&=&0,~s=3,\cdots, m-2,\\
X_{m}&=&0.
\end{eqnarray*}
Notice that $A_s,B_s,C_s,X_s$ also depend on $u^t$.

Suppose triangles $\triangle i_0i_jj_{j+1}$, $j=1,\cdots,m-1$ are not degenerate in the flow. Then from Lemma \ref{basic},
$$\frac{d\alpha_{i_0i_j}}{dt}=A_j\frac{du_j}{dt}-B_{j+1}\frac{du_{j+1}}{dt}-C_{j-1}\frac{du_{j-1}}{dt}.$$
From the construction of $X_j$, $\frac{d}{dt}\alpha_{i_0i_j}=0$, hence $\alpha_{i_0i_j}^t=\alpha_{i_0i_j}^0\geq \pi$ for $j=2,\cdots,m-2$. Then $A_j\geq B_j+C_j>0$, $j=2,\cdots,m-2$.

Fix $j\in\{3,\cdots,m-1\}$, we claim: Let $f_0=1$, $f_1=A_j$,
$f_{s+1}=A_{j-s}f_s-B_{j-s+1}C_{j-s}f_{s-1}$, $s=2,\cdots,j-2.$
Suppose $A_j>0$ and $A_{j-s}\geq B_{j-s}+C_{j-s}$ for $s=1,\cdots,j-2$. Then $f_s>0$, $f_s>B_{j-s}f_{s-1}$ for $s=1,\cdots,j-1$.

We prove the claim by induction. For $s=1$, the claim holds. Suppose the claim holds for $s$. We consider $s+1$. Since $f_{s}>0$,
$$f_{s+1}=A_{j-s}f_s-B_{j-s+1}C_{j-s}f_{s-1}\geq (B_{j-s}+C_{j-s})f_s-B_{j-s+1}C_{j-s}f_{s-1}.$$
So $f_{s+1}-B_{j-s}f_{s}\geq (f_{s}-B_{j-s+1}f_{s-1})C_{j-s}>0$ by the assumption. Then $f_{s+1}>B_{j-s}f_s>0$. We finish the proof of the claim.

Now we show $B_{j+1}X_{j+1}=A_jX_j-C_{j-1}X_{j-1}>0$ for $j=2,\cdots,m-2$. We observe that $f_{s+1}X_{j-s}-C_{j-s-1}f_sX_{j-s-1}>0$ is equivalent to $f_{s+2}X_{j-s-1}-C_{j-s-2}f_{s+1}X_{j-s-2}>0$. In fact,
\begin{\eq}
f_{s+1}X_{j-s}-C_{j-s-1}f_sX_{j-s-1}&=&f_{s+1}B_{j-s}^{-1}(A_{j-s-1}X_{j-s-1}-C_{j-s-2}X_{j-s-2})-C_{j-s-1}f_sX_{j-s-1}\\
&=& B_{j-s}^{-1}\big((A_{j-s-1}f_{s+1}-B_{j-s}C_{j-s-1}f_s)X_{j-s-1}-C_{j-s-2}f_{s+1}X_{j-s-2}\big)\\
&=& B_{j-s}^{-1}(f_{s+2}X_{j-s-1}-C_{j-s-2}f_{s+1}X_{j-s-2}).
\end{\eq}
So by induction, to show $A_jX_j-C_{j-1}X_{j-1}>0$, by letting $s=j-3$, we only need to show $f_{j-2}X_{3}-C_2f_{j-3}X_2>0$. Since $A_2X_2-B_3X_3=0$ and $X_2=1$, it is equivalent to $f_{j-1}=A_2f_{j-2}-B_3C_2f_{j-3}>0$. Since we have obtained $A_{s}\geq B_s+C_s>0$ for $s=2,\cdots,m-2$, then by the claim above, we obtain $f_{j-1}>0$ for $j=2,\cdots,m-2$. So $X_j>0$ for $j=1,\cdots,m-1$, which means $u_j$ is increasing for $j=1,\cdots,m-1$.

Next we show $\alpha_{i_0i_{m-1}}$ is increasing, from Lemma \ref{basic}, we have
$$\frac{d\alpha_{i_0i_{m-1}}}{dt}=A_{m-1}\frac{du_{m-1}}{dt}-C_{m-2}\frac{du_{m-2}}{dt}=A_{m-1}X_{m-1}-C_{m-2}X_{m-2}.$$
As the discussion above, if $A_{m-1}>0$ then $\frac{d\alpha_{i_0i_{m-1}}}{dt}>0$.
So if $\alpha_{i_0i_{m-1}}\geq \pi$, then $\frac{d\alpha_{i_0i_{m-1}}}{dt}>0$. Since $\alpha^0_{i_0i_{m-1}}\geq \pi$, we have $\alpha^t_{i_0i_{m-1}}> \pi$ and increasing for $t>0$.

Let $T$ be the maximal existence time of $u^t$ in $\mc{T}_{\bar{u},\underline{u}}$. Since $\frac{du_2}{dt}=1$, $T$ must be finite. Notice that it may happen $T=0$. Since $u^t$ is increasing and bounded, when $t$ approaches to $T$, it has a limit $\hat{u}$ touching the boundary of $\mc{T}_{\bar{u},\underline{u}}$.
There are two situations. The first one is $\hat{u}_j<\overline{u}_j$, $j=1,\cdots,m-1$, and then one of the triangles $\triangle i_0i_ji_{j+1}$, $j=1,\cdots,m-1$ degenerates. The second one is $\hat{u}_{j_0}=\overline{u}_{j_0}$ for some $j_0\in\{2,\cdots,m-1\}$.

We rule out the first situation. Suppose $\triangle i_0i_{j_0}i_{j_0+1}$ degenerates. Suppose $\hat{l}_{i_0i_{j_0+1}}\leq \hat{l}_{i_0i_{j_0}}$. Then from the part (1) of Lemma \ref{tri}, the only possibility is $\hat{l}_{i_0i_{j_0+1}}+\hat{l}_{i_0i_{j_0}}\hat{l}_{i_0i_{j_0+1}}=\hat{l}_{i_0i_{j_0}}$. So $\hat{\theta}_{\angle i_0i_{j_{0}}i_{j_0+1}}=0$. Since $\hat{\alpha}_{i_0i_{j_0+1}}\geq \pi$, we have $\hat{\theta}_{\angle i_0i_{j_{0}+2}i_{j_0+1}}=\pi$. Then we can repeat this procedure until $\triangle i_{m-1}i_0i_{m}$. Notice $\hat{u}_m=\overline{u}_m$ and $\hat{u}_{m-1}<\overline{u}_{m-1}$, it contradicts to the part (2) of Lemma \ref{tri}.

So it must be the second situation. Suppose $\hat{u}_{j_0}=\overline{u}_{j_0}$. Then we repeat this procedure to all the configurations. Then finally we obtain $m=2$ and $\tilde{u}_{j}=\bar{u}_j$ for every $j=1,\cdots,n$, where $\tilde{u}$ is the conformal factor at the time when the procedure stops. If throughout this procedure, the flow never runs, then $u=u^0=\tilde{u}=\bar{u}$, contradiction. If the flow runs for a while, for example the case we discuss above, then $\hat{\alpha}_{i_0i_{m-1}}>\pi$. Notice that this condition preserves during the procedure, so $\tilde{\alpha}_{i_0i_{m-1}}>\pi$. Therefore it contradicts to $\tilde{u}=\bar{u}$ but $\bar{\alpha}_{i_0i_{m-1}}\leq\pi$.
We finish the proof.
\end{proof}

\subsection{Proof of Proposition \ref{2.2}}\label{p2}
In this section, we prove Proposition \ref{2.2}. Let $(T,l_0)$ be the standard hexagonal triangulated plane with $l_0\equiv 1$. Let $(T,l)$ be a triangulated plane conformal to the standard one. Let $l=u*l_0$.

In \cite{WGS}, Wu, Gu and Sun introduced the notion of quasi-harmonicity.
\begin{definition}\label{quasih}
Let $f$ be a function on $V$. For $m>0$, we call $f$ is quasi-harmonic with harmonic factor $m$ at $i\in V$, if there is a weighted average $f(i)=\sum\limits_{j\sim i}m_{j}f(j)$, where $m_j$ depends on $i$ and $\sum\limits_{j\sim i}m_{j}=1$, such that $m_j\geq m$ for $j\sim i$. We call $f$ is quasi-harmonic with harmonic factor $m$ if $f$ is quasi-harmonic with harmonic factor $m$ for all $i\in V$.
\end{definition}
The quasi-harmonicity of $f$ means $f(i)$ is a weighted average of the values of its neighbors, but not being too close to the maximum or minimum. In particular, if $f(i)$ is close to the supremum of $f$, then its neighbors must also be close to the supremum. The next technical lemma gives a sufficient condition to show the quasi-harmonicity.
\begin{lemma}\label{aver}
Given $\ep,M>0$. Let $a_i\in \mathbb{R}$, $|a_i|\leq M$, $i=1,\cdots, 6$. Suppose $\max\limits_i a_i\geq \ep$ and $\min\limits_{i} a_i\leq -\ep$. Then there exists $m>0$ depending on $\ep$ and $M$, such that for some $m_i\geq m$, $i=1,\cdots, 6$ with
$\sum\limits_{i=1}^{6}m_i=1$ we have, $$\sum\limits_{i=1}^{6}m_ia_i=0.$$
\end{lemma}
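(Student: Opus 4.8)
The plan is to realize $0$ as a value of the affine functional $(m_1,\dots,m_6)\mapsto\sum_i m_i a_i$ restricted to the ``shrunken'' simplex
$$\Delta_m=\Big\{(m_1,\dots,m_6): m_i\ge m,\ \textstyle\sum_{i=1}^6 m_i=1\Big\},$$
and to choose $m>0$ large enough (depending only on $\ep,M$) that this functional takes both a nonnegative and a nonpositive value on $\Delta_m$. First I would set $m=\frac{\ep}{6(M+\ep)}$, so that $6m=\frac{\ep}{M+\ep}<1$ and $\Delta_m\neq\varnothing$. Every point of $\Delta_m$ can then be written as $m_i=m+(1-6m)\lambda_i$ with $(\lambda_1,\dots,\lambda_6)$ ranging over the standard simplex $\Delta=\{\lambda_i\ge 0,\ \sum_i\lambda_i=1\}$.

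Writing $\bar a=\frac16\sum_i a_i$, a direct computation gives
$$S(\lambda):=\sum_{i=1}^6 m_i a_i=6m\,\bar a+(1-6m)\sum_{i=1}^6\lambda_i a_i.$$
Since $\lambda\mapsto\sum_i\lambda_i a_i$ is affine on $\Delta$, its range is exactly $[\min_i a_i,\max_i a_i]$, so the range of $S$ is the interval with endpoints $6m\,\bar a+(1-6m)\min_i a_i$ and $6m\,\bar a+(1-6m)\max_i a_i$. I would then check that $0$ lies in this interval. Using $\bar a\ge -M$, $\max_i a_i\ge\ep$, and the value of $m$,
$$6m\,\bar a+(1-6m)\max_i a_i\ \ge\ -6mM+(1-6m)\ep=\ep-6m(M+\ep)=0,$$
and symmetrically, using $\bar a\le M$ and $\min_i a_i\le-\ep$,
$$6m\,\bar a+(1-6m)\min_i a_i\ \le\ 6mM-(1-6m)\ep=6m(M+\ep)-\ep=0.$$
Hence the upper endpoint of the range of $S$ is $\ge 0$ and the lower endpoint is $\le 0$.

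Finally, since $\Delta$ is connected and $S$ is continuous (indeed affine), the intermediate value theorem --- applied for instance along the segment joining the vertex of $\Delta$ on which $\sum_i\lambda_i a_i=\min_i a_i$ to the one on which it equals $\max_i a_i$ --- yields a $\lambda\in\Delta$ with $S(\lambda)=0$. The corresponding weights $m_i=m+(1-6m)\lambda_i\ge m$ satisfy $\sum_i m_i=1$ and $\sum_i m_i a_i=0$, as required. I do not expect a genuine obstacle here; the only point demanding care is that the lower bound $m$ be \emph{uniform}, i.e.\ depend on $\ep,M$ alone and not on the particular $a_i$, which is exactly why the barycentric shift $6m\,\bar a$ must be balanced against the worst cases $\bar a=\pm M$ in the two displayed estimates. (An alternative, more hands-on route is to assign weight $m$ to the four non-extremal indices and solve a $2\times 2$ linear system for the weights on the maximizing and minimizing indices; the simplex/IVT formulation above simply packages this while avoiding the resulting case analysis.)
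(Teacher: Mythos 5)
Your proof is correct, and it takes a genuinely different route from the paper's. The paper argues constructively: after assuming WLOG that the average $\bar a=\frac14\sum_{i=2}^5 a_i$ of the four non-extremal entries is nonnegative, it pairs the minimum $a_1\le-\ep$ against the quantity $\tilde a=\frac12(\bar a+a_{6})\ge\frac\ep2$ via the trivial identity $\tilde a\,a_1+(-a_1)\tilde a=0$, expands $\tilde a$ so that the second term distributes weight over the indices $2,\dots,6$, and then normalizes; the weights come out in closed form ($m_1=\tilde a/(\tilde a-a_1)$, the others proportional to $-a_1$), with the uniform lower bound read off from $\tilde a\ge\ep/2$, $-a_1\ge\ep$, and $\tilde a-a_1\le 2M$. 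Your argument is instead a soft existence proof: you fix the uniform bound $m=\frac{\ep}{6(M+\ep)}$ \emph{in advance}, parametrize the shrunken simplex $\Delta_m$ by the standard simplex, observe that the functional becomes $6m\,\bar a+(1-6m)\sum_i\lambda_i a_i$ with $\bar a$ now the full average, and check that the two endpoints of its range straddle $0$ by worst-case estimates, finishing with the intermediate value theorem. What your approach buys: no sign-splitting case analysis (the paper's WLOG $\bar a\ge0$ tacitly requires a symmetric second case), a cleaner explicit value of $m$ stated up front, and immediate generalization from $6$ to any number of terms. What the paper's approach buys: explicit formulas for the weights themselves rather than a pure existence statement, at the cost of the case split and of having to apportion the factor $\tilde a$ among the indices (indeed, as written there the displayed weights $m_2=\cdots=m_6$ must be corrected by the factors $\frac18$ and $\frac12$ to actually sum to $1$ --- a bookkeeping issue your formulation avoids entirely). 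Both arguments are elementary and both yield a bound $m$ depending only on $\ep$ and $M$, as required.
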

\begin{proof}
Suppose $a_1=\min\limits_i a_i,~a_6=\max\limits_{i} a_i$. Set $\bar{a}=\frac{1}{4}\sum\limits_{i=2}^5a_i$. We assume $\bar{a}\geq 0$. Set $\tilde{a}=\frac{1}{2}(\bar{a}+a_6)\geq \frac{\ep}{2}$. Then $$0=\ti{a}a_1+(-a_1)\ti{a}=\ti{a}a_1+\frac{-a_1}{8}\sum\limits_{i=2}^5a_i+\frac{-a_1}{2}a_6.$$
Let $m_1=\frac{\ti{a}}{\ti{a}-a_1},~m_2=\cdots=m_6=\frac{-a_1}{\ti{a}-a_1}$. Then $\frac{\ti{a}}{\ti{a}-a_1}\geq \frac{\frac{\ep}{2}}{2M}$ and $\frac{-a_1}{\ti{a}-a_1}\geq \frac{\ep}{2M}$.
We finish the proof.
\end{proof}

Let $H=(i_0;i_1,i_2,i_3,i_4,i_5,i_6)$ be a hexagon centered at $i_0$. Let $l$ and $\ti{l}$ be two Delaunay metrics on $H$ conformal to $l_0$ with zero curvature at $i_0$. Let $l=u*l_0,\ti{l}=\ti{u}*l_0$. Set $u_j=u_{i_j},\ti{u}_j=\ti{u}_{i_j}$, $j=0,1,\cdots,6$ for short. Define $\delta u_{j}=\ti{u}_{j}-u_{j}.$ 
The next lemma shows that if $\delta u$ is bounded, then either $\delta u$ is quasi-harmonic at $i_0$ or $\delta u_0$ is close to the values of the neighbors of $i_0$.
\begin{lemma}\label{hex1}
Suppose $|\delta u_j|\leq M$, $j=0,\cdots,6$. Then for any $\epsilon>0$, there exists a constant $m>0$ depending on $\ep$ and $M$, such that either

(1) $|\delta u_j-\delta u_0|\leq \ep$ for $j=1,\cdots,6$; or

(2) $\delta u$ is quasi-harmonic at $i_0$ with harmonic factor $m$, i.e. there exists $m_j\geq m$, $j=1,\cdots,6$, $\sum\limits_{j=1}^6m_j=1$ such that $\sum\limits_{j=1}^6 m_j(\delta u_j-\delta u_0)=0.$
\end{lemma}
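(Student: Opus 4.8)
The plan is to reduce the statement to the six differences $b_j := \delta u_j - \delta u_0$ and to split into a ``large'' regime handled by the averaging Lemma \ref{aver} and a ``one-sided'' regime handled by the maximum principle Lemma \ref{mp}. First I would normalize: since the angles, and hence $K$, are invariant under adding a constant to the conformal factor, I may assume $u_0=\tilde u_0=0$, so that $\delta u_0=0$, $b_j=\tilde u_j-u_j$, and $|b_j|\le 2M$. The two hypotheses $K_u=K_{\tilde u}=0$ become $\theta_u=\theta_{\tilde u}=2\pi$. In these terms conclusion (1) reads $|b_j|\le\epsilon$ for all $j$, and conclusion (2) reads $\sum_{j=1}^6 m_j b_j=0$ with $m_j\ge m$ and $\sum m_j=1$. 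Note the symmetry $u\leftrightarrow\tilde u$ sends $b_j\mapsto -b_j$ and preserves both conclusions, which I will use to halve the case analysis.

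Second, I would dispose of two regimes directly. If among $b_1,\dots,b_6$ there is both a value $\ge\epsilon$ and a value $\le-\epsilon$, then Lemma \ref{aver} (applied with bound $2M$) produces $m_j\ge m(\epsilon,2M)>0$ summing to $1$ with $\sum m_j b_j=0$, which is conclusion (2). If instead all $b_j\ge 0$ or all $b_j\le 0$, then $\tilde u\ge u$ or $\tilde u\le u$ componentwise (with equality in the $0$-th slot); since both configurations lie in $\mc D$, are flat at the center, and are normalized, the maximum principle Lemma \ref{mp} forces $\tilde u=u$, i.e.\ all $b_j=0$, giving conclusion (1) trivially.

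Third, after these reductions and the symmetry, the only surviving case is: all $b_j<\epsilon$, some $b_{j_0}\le-\epsilon$, and (since the all-nonpositive case was already handled) some $b_k>0$ --- the ``large negative deviation, only small positive deviation'' regime, in which (1) fails and Lemma \ref{aver} does not apply. Here I would exploit flatness quantitatively: differentiating $\theta_u=\theta_{\tilde u}$ along the segment $u+tb$ and invoking Corollary \ref{derivative} yields, wherever the path stays in $\mc D\cap\mc T$, a relation $\sum_j w_j b_j=0$ with Delaunay-nonnegative weights $w_j=\cot\theta_{\angle i_0i_{j+1}i_j}+\cot\theta_{\angle i_0i_{j-1}i_j}\ge 0$. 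The goal is to upgrade this to a relation all of whose six weights are bounded below by some $m(\epsilon,M)$.

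The hard part is exactly this last regime, and it is where the Delaunay hypothesis bites. There are two linked difficulties. First, the segment (indeed any naive path) joining $u$ and $\tilde u$ need not remain inside $\mc D$, nor even inside $\mc T$, so the sign of the weights $w_j$ is not automatic; this is precisely the connectivity obstruction flagged in the remark after Lemma \ref{mp}, which the flow construction of Lemma \ref{key} was built to overcome, and I would reuse that device here. Second, and more seriously, the weight attached to coordinate $j$ degenerates, $w_j\to 0$, exactly as the edge $i_0i_j$ approaches the Delaunay boundary $\alpha_{i_0i_j}=\pi$, so a large negative $b_{j_0}$ can be nearly invisible to the curvature relation. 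The key quantitative estimate I must prove is that a one-sided deviation of size $\ge\epsilon$ forces a deviation of size $\ge c(\epsilon,M)$ of the opposite sign --- after which Lemma \ref{aver} applies with threshold $c(\epsilon,M)$ and yields (2). I expect to obtain this by a compactness argument on the closed bounded set $\mc D_{\bar u,\underline u}$ of admissible configurations, using the continuity of the angle functions through degenerate triangles (Lemma \ref{tri}) to pass to limits and the maximum principle to exclude the limiting one-sided configurations. The degeneration of these cotangent weights at the Delaunay boundary is exactly what makes the Delaunay condition harder to handle than the $\delta$-condition of \cite{WGS}, under which the angles, and hence the weights, stay uniformly bounded away from the degenerate values.
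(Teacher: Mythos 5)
Your proposal is correct and, in its essential content, coincides with the paper's proof: the paper also normalizes $u_0=\ti{u}_0=0$, reduces (via the $u\leftrightarrow\ti{u}$ symmetry) to the case $\max_j \delta u_j\geq \ep$, proves the key quantitative claim that this forces $\min_j \delta u_j\leq -\ep_1(\ep)$ by a compactness-plus-maximum-principle argument (a sequence of pairs violating the claim converges, by compactness of $\mc{D}^0=\{u\in\mc{D}: K_u=0,\,u_0=0\}$ from Lemma \ref{ratio}, to a limit pair with $u^{\infty}\leq\ti{u}^{\infty}$, $u^{\infty}\neq\ti{u}^{\infty}$, contradicting Lemma \ref{key}), and then concludes with Lemma \ref{aver}. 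Your preliminary case-splitting and the detour through Corollary \ref{derivative} are unnecessary but harmless, and the only imprecision is that the compact set to use is $\mc{D}^0$ (compact thanks to Lemma \ref{ratio}) rather than $\mc{D}_{\bar{u},\underline{u}}$, which is defined only relative to a fixed pair of configurations.
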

\begin{proof}
Suppose situation (1) fails, we show situation (2) holds. Notice that $\delta u_j-\delta u_0$ is invariant under the transformation, $(u_j,\ti{u}_j)\mapsto (u_j+c,\ti{u}_j+\ti{c})$ for $j=0,\cdots,6$,
we may assume $u_0=\ti{u}_0=0$. Then after the similar transformation, $|\delta u_j|\leq 2M$.
By the assumption, we assume $\max\limits_{j}\delta u_j\geq \ep$. 
Denote $\mc{D}^0=\{u\in\mc{D}~|~K_{u}=0,u_0=0\}$.
From Lemma \ref{ratio}, we see $\mc{D}^0$ is compact. We claim there is a constant $\ep_1>0$ depending on $\ep$, such that $\min\limits_{j}\delta u_j\leq -\ep_1$. If it is false, there exists a sequence of pairs $(u^n,\ti{u}^n)\in \mc{D}^0\times\mc{D}^0$
such that
$$\max\limits_{j}\delta u_j\geq \ep\text{ and } \min\limits_{j}\delta u_j\geq -\frac{1}{n}.$$
Since $\mc{D}^0$ is compact, by taking the subsequence, we may assume $u^n\rightarrow u^{\infty}$ and similarly $\ti{u}^n\rightarrow \ti{u}^{\infty}$. Then we have $$u^{\infty},\ti{u}^{\infty}\in \mc{D},\quad u^{\infty}\leq \ti{u}^{\infty},\quad u^{\infty}\neq \ti{u}^{\infty}~\text{ and }~K_{u^{\infty}}=K_{\ti{u}^{\infty}}=0,$$
which contradicts to Lemma \ref{key}. So we have $\max\limits_{j}\delta u_j\geq \ep_1$ and $\min\limits_{j}\delta u_j\leq -\ep_2$. Then Lemma \ref{hex1} follows from Lemma \ref{aver}.
\end{proof}

Recall the vertices of $T$ are spanned by $1$ and $\omega=\frac{1+\sqrt{-3}}{2}$. For either $c=1$ or $\omega$, define the gradient $\nabla_c u(i)=u(i+c)-u(i)$.
From Lemma \ref{ratio}, $|\nabla_c u|$ has a universal bound. For a hexagon $H=(i_0;i_1,i_2,i_3,i_4,i_5,i_6)$ in $T$, applying Lemma \ref{hex1} to the case $\ti{u}(i)=u(i+c)$, i.e. $\delta u=\nabla_c u$, we obtain
\begin{corollary}\label{hex2}
For any $\epsilon>0$, there exists a constant $m=m(\epsilon)>0$ depending on $\ep$, such that either

(1) $|\nabla_c u(i_j)-\nabla_c u(i_0)|\leq \ep$ for $j=1,\cdots,6$; or

(2) $\nabla_c u$ is quasi-harmonic at $i_0$ with harmonic factor $m$.
\end{corollary}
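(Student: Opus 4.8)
The plan is to deduce Corollary~\ref{hex2} as a direct specialization of Lemma~\ref{hex1}, the only real work being to check that the translated configuration is admissible. Fix a hexagon $H=(i_0;i_1,\cdots,i_6)$ in $T$ and $c\in\{1,\omega\}$. First I would set $\ti{u}(i)=u(i+c)$, so that $\delta u=\ti{u}-u=\nabla_c u$ by definition. Since the vertex set of $T$ is $\mathbb{Z}$-spanned by $1$ and $\omega$, translation by $c$ is a combinatorial automorphism of $T$; hence the metric $\ti{l}=\ti{u}*l_0$ restricted to $H$ carries the same edge lengths as $l=u*l_0$ on the hexagon centered at $i_0+c$. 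Because $(T,l)$ is flat and Delaunay on the whole plane, this translated hexagon is Delaunay with zero curvature at its center, and therefore $\ti{u}$ restricted to $H$ is a Delaunay metric with $K_{\ti{u}}=0$ at $i_0$. Thus both $u$ and $\ti{u}$ satisfy the hypotheses placed on the pair of metrics in Lemma~\ref{hex1}.

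Next I would supply the uniform bound. By Lemma~\ref{ratio} there is a universal constant $M_0$ with $|\nabla_c u|\leq M_0$, so $|\delta u_j|=|\nabla_c u(i_j)|\leq M_0$ for every $j=0,\cdots,6$. Applying Lemma~\ref{hex1} with $M=M_0$ then yields, for any $\ep>0$, a constant $m>0$ depending on $\ep$ and $M_0$ for which one of its two alternatives holds. The first alternative $|\delta u_j-\delta u_0|\leq\ep$ is exactly $|\nabla_c u(i_j)-\nabla_c u(i_0)|\leq\ep$ for $j=1,\cdots,6$, and the second alternative is precisely the statement that $\nabla_c u$ is quasi-harmonic at $i_0$ with harmonic factor $m$.

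The only point requiring care is that the harmonic factor must depend on $\ep$ alone and not on the particular hexagon. This is immediate from the preceding two steps: the bound $M_0$ furnished by Lemma~\ref{ratio} is universal, so the dependence of $m$ on $M$ collapses to a dependence on the fixed constant $M_0$, leaving $m=m(\ep)$. I expect no genuine obstacle here; the substantive content has already been absorbed into Lemma~\ref{hex1} (and ultimately into the maximum principle Lemma~\ref{key}), and the corollary is the translation-invariance bookkeeping that converts the two-metric statement for a single hexagon into a gradient statement along the two lattice directions $c=1$ and $c=\omega$.
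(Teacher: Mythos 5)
Your proposal is correct and follows exactly the paper's route: the paper likewise obtains Corollary~\ref{hex2} by setting $\ti{u}(i)=u(i+c)$ so that $\delta u=\nabla_c u$, invoking the universal bound $M_0$ on $|\nabla_c u|$ from Lemma~\ref{ratio}, and applying Lemma~\ref{hex1} with $M=M_0$ so that the harmonic factor depends on $\ep$ alone. Your explicit verification that the translated hexagon is Delaunay with zero curvature at its center (via translation invariance of the lattice and the global flatness and Delaunay hypotheses) is a point the paper leaves implicit, but it is the same argument.
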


Now we are ready to prove Proposition \ref{2.2}. Let $c$ be $1$ or $\omega$. Roughly speaking, from the comments below Definition \ref{quasih}, if $\nabla_c u(i)$ is close to the supremum, then either situation (1) or (2) of Corollary \ref{hex2} implies the values of its neighbors are also close to the supremum. Recall $B(i,R)$ denotes the set of triangles whose vertices can be connected with $i$ by a path of at most $R$ edges.
\begin{proposition}\label{grad}
For any $\ep>0$, $R\in \mathbb{N}$, there exists a constant $\delta>0$ depending on $\ep,R$, such that for any $M>0$, $i\in V$, if
$$\nabla_c u|_{B(i,R)}\leq M \text{ and } \nabla_c u(i)\geq M-\delta,$$
then $\nabla_c u|_{B(i,R)}\geq M-\ep$.
\end{proposition}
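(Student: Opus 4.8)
The plan is to treat Corollary \ref{hex2} as a one-step propagation device and then iterate it $R$ times, pushing the ``near-maximum'' information outward from the center $i$ to every vertex of $B(i,R)$. Because the metric is flat we have $K_u=0$ at every vertex, and the shifted metric $u(\,\cdot+c)$ is flat as well, so the dichotomy of Corollary \ref{hex2} is available at the hexagon centered at each vertex of $T$. All constants produced below will depend only on $\ep$ and $R$ (never on $M$), since the whole argument is invariant under the translation $M\mapsto M-\text{const}$ of all the values $\nabla_c u$, and the harmonic factor $m(\cdot)$ in Corollary \ref{hex2} depends only on the prescribed tolerance.

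First I would establish a one-step lemma: for each $\eta>0$ there is $\delta'(\eta)>0$ such that, for any hexagon centered at a vertex $v$ all six of whose neighbors satisfy $\nabla_c u\leq M$, the bound $\nabla_c u(v)\geq M-\delta'(\eta)$ forces $\nabla_c u\geq M-\eta$ at each neighbor. To prove it, apply Corollary \ref{hex2} at $v$ with tolerance $\eta/2$, obtaining the harmonic factor $m=m(\eta/2)$, and set $\delta'(\eta)=\min\{\eta/2,\,m\eta\}$. In alternative (1) each neighbor value differs from $\nabla_c u(v)$ by at most $\eta/2$, hence is at least $M-\delta'(\eta)-\eta/2\geq M-\eta$. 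In alternative (2) we have $\nabla_c u(v)=\sum_{j=1}^6 m_j\,\nabla_c u(i_j)$ with $m_j\geq m$ and $\sum_j m_j=1$; if some neighbor satisfied $\nabla_c u(i_{j_0})<M-\eta$, then bounding the remaining five neighbors by $M$ gives $\nabla_c u(v)<M-m_{j_0}\eta\leq M-m\eta\leq M-\delta'(\eta)$, contradicting the hypothesis on $v$. Thus both alternatives yield the claim, and note $\delta'(\eta)\leq\eta/2<\eta$.

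Next I would fix the constants by backward recursion. Put $\ep_R=\ep$ and $\ep_k=\delta'(\ep_{k+1})$ for $k=R-1,\dots,0$; each $\ep_k>0$ depends only on $\ep$ and $R$, the sequence is strictly increasing ($\ep_0<\ep_1<\cdots<\ep_R=\ep$ because $\delta'(\eta)<\eta$), and I set $\delta=\ep_0$. The assertion to prove by induction on $k$ is that every vertex of $B(i,R)$ at graph distance $k$ from $i$ satisfies $\nabla_c u\geq M-\ep_k$. The base case $k=0$ is the hypothesis $\nabla_c u(i)\geq M-\delta=M-\ep_0$. For the inductive step, let $w$ lie at distance $k+1\leq R$; then $w$ is a neighbor of some $v$ at distance $k\leq R-1$, whose hexagon has all six neighbors at distance $\leq k+1\leq R$, hence inside $B(i,R)$ and so $\leq M$ by hypothesis. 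Since $\nabla_c u(v)\geq M-\ep_k=M-\delta'(\ep_{k+1})$ by the inductive hypothesis, the one-step lemma gives $\nabla_c u(w)\geq M-\ep_{k+1}$. Since every distance-$(k+1)$ vertex is a neighbor of some distance-$k$ vertex, the induction closes, and taking $k+1=R$ yields $\nabla_c u\geq M-\ep$ throughout $B(i,R)$.

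I expect the only real subtlety to be the bookkeeping of constants. The harmonic factor $m=m(\eta)$ in Corollary \ref{hex2} itself varies with the tolerance $\eta$, so one must check that the composed recursion $\ep_k=\delta'(\ep_{k+1})$ remains strictly positive through all $R$ steps; this holds because each $\delta'(\eta)$ is a positive minimum of two positive quantities. The genuine analytic content---the quasi-harmonicity coming from the maximum principle Lemma \ref{key}---is already packaged inside Corollary \ref{hex2}, so this proposition reduces to the discrete propagation argument above.
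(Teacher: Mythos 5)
Your proposal is correct and follows essentially the same route as the paper's proof: the paper also applies the dichotomy of Corollary \ref{hex2}, defines the identical backward recursion $\epsilon_{k}=\min\{\frac{\epsilon_{k+1}}{2},\,\epsilon_{k+1}m(\frac{\epsilon_{k+1}}{2})\}$ with $\delta=\epsilon_0$, and inducts on the graph distance from $i$, handling the quasi-harmonic alternative by the same averaging estimate that you phrase as a contradiction. Your packaging of the recursion into a one-step lemma $\delta'(\eta)$ is only a cosmetic reorganization.
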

\begin{proof}
For any $\epsilon>0$, let $m(\epsilon)$ be the constant given
by Corollary
\ref{hex2}. Let $\epsilon_{R}=\epsilon$, $\epsilon_{j-1}=\min\{\frac{\epsilon_{j}}{2},\epsilon_{j}m(\frac{\epsilon_{j}}{2})\}$
for any $j=1,\cdots,R$. We let $\delta=\epsilon_{0}$. Then we
prove by induction that $\nabla_{c}u|_{B(i,j)}\ge M-\epsilon_{j}$.
\begin{itemize}
\item $j=0$, $\nabla_{c}u(i)\ge M-\delta=M-\epsilon_{0}$;
\item Suppose the conculsion holds for $j=k$, i.e.
\[
\nabla_{c}u|_{B(i,k)}\ge M-\epsilon_{k}=M-\min\{\frac{\epsilon_{k+1}}{2},\epsilon_{k+1}m(\frac{\epsilon_{k+1}}{2})\}.
\]

Then when $j=k+1$, either for any $p\in\partial B(i,k+1)$, which
is a neighbor of $q\in B(i,k)$, there holds
\[
\nabla_{c}u(p)\ge\nabla_{c}u(q)-\frac{\epsilon_{k+1}}{2}\ge M-\epsilon_{k}-\frac{\epsilon_{k+1}}{2}\ge M-\epsilon_{k+1},
\]
or $\nabla_{c}u$ is quasi-harmonic at $q$ with factor $m(\frac{\epsilon_{k+1}}{2})$,
which again implies
\[
\nabla_{c}u(p)\ge M-\frac{\epsilon_{k}}{m(\frac{\epsilon_{k+1}}{2})}\ge M-\epsilon_{k+1}.
\]
\item By induction we know that
\[
\nabla_{c}u|_{B(i,j)}\ge M-\epsilon_{j},
\]
and in particular $\nabla_{c}u|_{B(i,R)}\ge M-\epsilon_{R}.$
\end{itemize}
\end{proof}

\section{Viewpoints from hyperbolic geometry}\label{hyp}
\subsection{PL conformal vs. hyperbolic geometry}
We first recall some basic facts in hyperbolic geometry. Identifying the unit disk $\mathbb{D}\subset\mathbb{C}$ with the set $\{(x_1,x_2,x_3)\in\mathbb{R}^3: x_1^2+x_2^2<1,~x_3=0\}$
in $\mathbb{R}^3$, and mapping $\mathbb{D}$ under the stereographic projection $\Pi$ with respect to the south pole $(0,0,-1)$, we obtain the upper half of the unit sphere
$$\mathbb{S}^2_{+}=\{x=(x_1,x_2,x_3)\in\mathbb{R}^3: |x|=1, x_3>0\}.$$

Thus, composing $\Pi$ with the projection $P: (x_1, x_2, x_3)\mapsto(x_1, x_2, 0)$, we obtain a homeomorphism $P\Pi$ from $\mathbb{D}$ onto itself. We can extend $P\Pi$ continuously to the boundary $S^1$ of $\mathbb{D}$ by setting $P\Pi(x)=x$. The geodesic lines on the Poincar\'e unit disk model $(\mathbb{D}, \frac{2|dz|}{1-|z|^2})$ of $\mathbb{H}^2$ are mapped under $P\Pi$ to Euclidean segments with the same end points. The disc $\mathbb{D}$ with the metric induced by $P\Pi$ from $(\mathbb{D}, \frac{2|dz|}{1-|z|^2})$ is called the \emph{Klein model} (also called the projective model) of the hyperbolic plane $\mathbb{H}^2$. See Figure \ref{fig-g2}. Similarly, we obtain the Klein model on the interior of any circle in $\mathbb{C}$.

\begin{figure}[htbp]\centering
\includegraphics[width=0.7\textwidth]{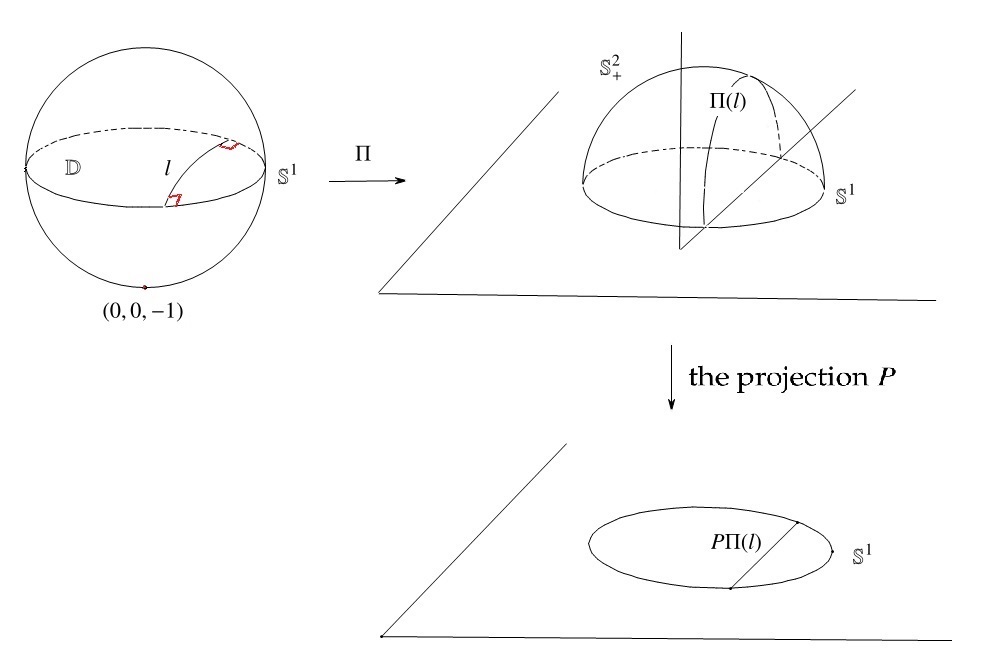}
\caption{The stereographic projection and the Klein model}\label{fig-g2}
\end{figure}

On any PL surface $(\Sigma, T, l)$, Bobenko, Pinkall and Springborn \cite{BPS} constructed a complete natural hyperbolic metric with cusps. Consider a Euclidean triangle with its circumcircle. Interpret the interior of the circumcircle as the Klein model, then the Euclidean triangle becomes an ideal hyperbolic triangle, that is, a hyperbolic triangle with vertices at infinity. This construction equips any Euclidean triangle (minus its vertices) with a hyperbolic metric. If it is performed on all triangles in the triangulation $T$, then the hyperbolic metrics induced on the individual triangles fit together, so $\Sigma\backslash V$ is equipped with a hyperbolic metric with cusps at the vertices. Thus, $T$ becomes an ideal triangulation of a hyperbolic surface $\Sigma$ with cusps $V$.

\begin{figure}[htbp]\centering
\includegraphics[width=0.4\textwidth]{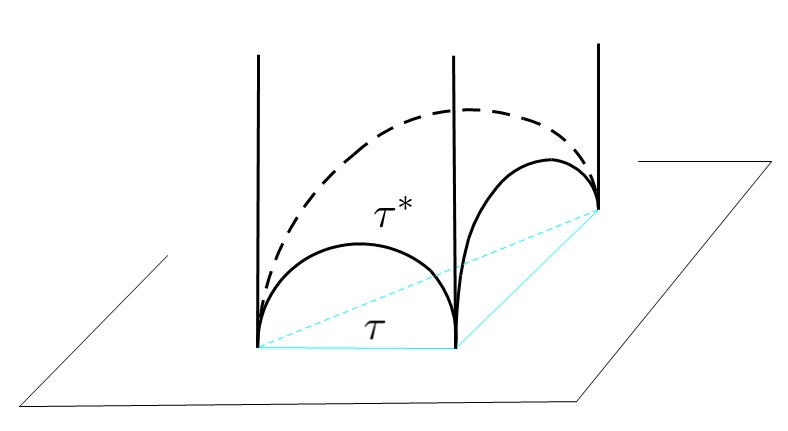}
\caption{A Euclidean triangle and the corresponding ideal one}\label{fig-g3}
\end{figure}

Gu, Luo, Sun and Wu \cite{GLSW} further expressed the above construction more geometrically. Consider $\mathbb{C}$ as the sphere at the infinity of the hyperbolic $3$-space $\mathbb{H}^3=\mathbb{C}\times\mathbb{R}_{>0}$. For each Euclidean triangle $\tau$ (considered as a subset of $\mathbb{C}$), let $\tau^*$ be the ideal hyperbolic triangle in $\mathbb{H}^3$ having the same set of vertices as that of $\tau$. Geometrically, $\tau^*$ is exactly the convex hull in $\mathbb{H}^3$ spanned by the three vertices of $\tau$. See Figure \ref{fig-g3}. If $\tau_1$, $\tau_2$ are two Euclidean triangles in $T$ glued along their common edge by a Euclidean isometry $f$, then one glues $\tau_1^*$ and $\tau_2^*$ along their corresponding edges by $\tilde{f}$ (the Poincar\'e extension of $f$). See Figure \ref{fig-g4}. In this way, one produces a hyperbolic metric $l^*$ on $\Sigma\setminus V$ with cusps $V$.
For any (oriented) edge $ij$, let $ijk$, $ijl$ be the two Euclidean triangles in $T$ that adjacent to $ij$. Penner \cite{Penner} showed that Thurston's shear coordinate at $ij$ of the hyperbolic metric $l^*$ constructed above is $\ln (l_{jl}l_{ik}/l_{il}l_{jk})$. At each vertex $i$, it is easy to see that all shear coordinates $\ln (l_{jl}l_{ik}/l_{il}l_{jk})$ sum to zero. By Thurston \cite{Thurston} $\S3.7$-$\S3.9$, the hyperbolic metric $l^*$ constructed above is complete.

\begin{figure}[htbp]\centering
\includegraphics[width=0.75\textwidth]{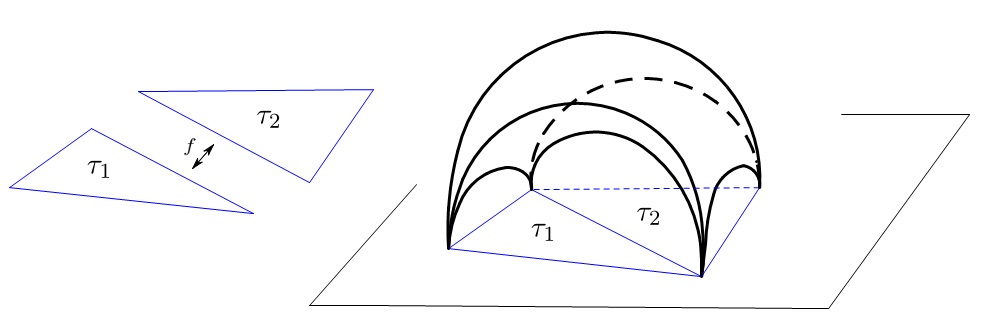}
\caption{Gluing Euclidean triangles and the corresponding ideal ones}\label{fig-g4}
\end{figure}

\begin{theorem}[Bobenko-Pinkall-Springborn]
\label{thm-BPS}
Two PL metrics $l$ and $\tilde{l}$ are PL conformal if and only if the corresponding complete hyperbolic metrics with cusps $l^*$ and $\tilde{l}^*$ are isometric.
\end{theorem}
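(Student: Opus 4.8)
The plan is to translate both sides of the equivalence into the language of Penner's decorated Teichm\"uller theory, built on the shear coordinates recalled just above. The organizing principle is that the complete cusped metric $l^*$ records only the underlying hyperbolic structure, whereas the PL metric $l$ carries the extra datum of a decoration (a horocycle at each cusp), the edge lengths $l_{ij}$ playing the role of Penner's $\lambda$-lengths for the decoration intrinsic to the circumcircle--Klein-model construction. In this dictionary a PL conformal change of $l$ should correspond precisely to a change of decoration of a \emph{fixed} hyperbolic structure, so the desired equivalence becomes: same hyperbolic structure $\iff$ differing only by a decoration.

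For the \emph{only if} direction, I would assume $\tilde{l}_{ij}=e^{u_i+u_j}l_{ij}$ and compute the shear coordinate of $\tilde{l}^*$ at an edge $ij$ with adjacent triangles $ijk,ijl$ directly from Penner's formula:
\[
\ln\frac{\tilde{l}_{jl}\tilde{l}_{ik}}{\tilde{l}_{il}\tilde{l}_{jk}}
=\ln\frac{e^{(u_j+u_l)+(u_i+u_k)}}{e^{(u_i+u_l)+(u_j+u_k)}}+\ln\frac{l_{jl}l_{ik}}{l_{il}l_{jk}}
=\ln\frac{l_{jl}l_{ik}}{l_{il}l_{jk}},
\]
since both exponents equal $u_i+u_j+u_k+u_l$. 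Hence $l^*$ and $\tilde{l}^*$ carry identical shear coordinates along the ideal triangulation $T$. Both are complete (established above from the vanishing of the shear sum at each cusp), and a complete cusped hyperbolic structure equipped with an ideal triangulation is determined up to isometry by its shear coordinates (Thurston), so $l^*$ and $\tilde{l}^*$ are isometric.

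For the \emph{if} direction, suppose $\phi\colon(\Sigma\setminus V,l^*)\to(\Sigma\setminus V,\tilde{l}^*)$ is an isometry carrying $T$ to $T$. Transporting the decoration attached to $\tilde{l}$ back along $\phi$ yields two decorations of one and the same hyperbolic structure with the same ideal triangulation; by Penner \cite{Penner} they differ by a horocyclic displacement $t_i$ at each cusp, under which $\lambda$-lengths rescale as $\tilde{l}_{ij}=e^{(t_i+t_j)/2}l_{ij}$, and setting $u_i=t_i/2$ exhibits PL conformality. Equivalently, and more self-containedly on the simply connected $\Sigma=\mathbb{C}$ of this paper, equality of shear coordinates says exactly that $w_{ij}:=\ln(\tilde{l}_{ij}/l_{ij})$ satisfies $w_{ik}+w_{jl}=w_{il}+w_{jk}$ across every edge; defining $u$ on a seed triangle by $u_i=\tfrac12(w_{ij}+w_{ik}-w_{jk})$ and propagating, this cocycle identity is precisely the compatibility forcing the values of $u$ to agree across each shared edge, while simple connectivity precludes any holonomy obstruction.

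The step I expect to be the genuine obstacle is the hypothesis, used in the \emph{if} direction, that $\phi$ respects the triangulation. An abstract isometry of the cusped surfaces need not send the geodesic ideal triangulation $T$ to itself, and then the edge-by-edge comparison above is simply unavailable, so two PL metrics giving isometric $l^*,\tilde{l}^*$ need not be PL conformal in the naive same-triangulation sense. Overcoming this is exactly the motivation for the generalized notion of discrete conformality (Definition 5.1.4 in \cite{BPS}): one either restricts $\phi$ to the marking/isotopy class adapted to $T$, or replaces $T$ by the common Delaunay triangulation so that $\phi$ may be made simplicial and the decoration-change argument applies. This is where the substantive content of the theorem resides.
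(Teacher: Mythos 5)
Your proposal is correct and follows essentially the same route as the paper: the ``only if'' direction is the same cancellation of the $e^{u_i+u_j}$ factors in the shear coordinate (length-cross-ratio), and your self-contained ``if'' argument---solving for $u$ uniquely on each triangle via $u_i=\tfrac12(w_{ij}+w_{ik}-w_{jk})$ and using the cross-ratio identity to force agreement of these solutions across each shared edge---is exactly the paper's converse. Two minor remarks: the appeal to simple connectivity is unnecessary, since the unique per-triangle solutions agreeing across edges of each vertex star already glue to a global $u$ on any triangulated surface, and the marking subtlety you flag at the end is equally implicit in the paper's proof, whose citation of Thurston's theory reads ``isometric'' precisely as ``equal shear coordinates along the fixed triangulation $T$.''
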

\begin{proof}
We give a proof here by following Bobenko, Pinkall, Springborn \cite{BPS} and Gu, Luo, Sun, Wu \cite{GLSW}\cite{Luo-book}. By Thurston's theory of hyperbolic surfaces, the hyperbolic metrics $l^*$ and $\tilde{l}^*$ are isometric if and only if their shear coordinates are the same at each edge $e\in T$. For any (oriented) edge $ij$, let $ijk$, $ijl$ be the two Euclidean triangles in $T$ that adjacent to $ij$. The shear coordinate at the edge $ij$ is $\ln \text{lcr}_{ij}$, where
$$\text{lcr}_{ij}=\frac{l_{il}l_{jk}}{l_{lj}l_{ki}}$$
is the length-cross-ratio at $ij$ (we refer $\S2.3$ \cite{BPS} for more about lcr). If $l$ and $\tilde{l}$ are PL conformal, that is, $\tilde{l}=u*l$ for some $u:V\rightarrow\mathbb{R}$, then obviously $\widetilde{\text{lcr}}_{ij}=\text{lcr}_{ij}$ for each $ij$. It follows that $l^*$ and $\tilde{l}^*$ are isometric. Conversely, if $l^*$ and $\tilde{l}^*$ are isometric, then $\widetilde{\text{lcr}}=\text{lcr}$. For each triangle $ijk$, one may find a unique solution $u_i$, $u_j$, $u_k$ so as $\tilde{l}_{st}=e^{u_s+u_t}l_{st}$, $st\in\{ij, jk, ki\}$. For another triangle $ijl$ which sharing a common edge with $ijk$, one may also find a unique solution $u_i'$, $u_j'$, $u_l'$ so as $\tilde{l}_{st}=e^{u_s'+u_t'}l_{st}$, $st\in\{ij, jl, li\}$. From $\widetilde{\text{lcr}}_{ij}=\text{lcr}_{ij}$, one easily see $u_i'=u_i'$. This implies there is a global defined function $u:V\rightarrow\mathbb{R}$ so that $\tilde{l}=u*l$ and hence $l$ and $\tilde{l}$ are PL conformal.
\end{proof}

\begin{figure}
\begin{minipage}[t]{0.5\linewidth}
\centering
\includegraphics[width=0.8\textwidth]{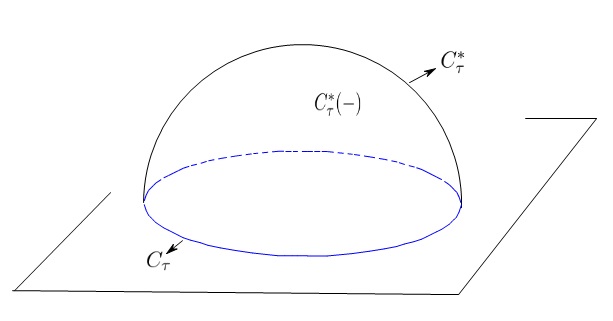}
\caption{A hyperbolic plane}\label{fig-g6}
\end{minipage}
\begin{minipage}[t]{0.5\linewidth}
\centering
\includegraphics[width=0.8\textwidth]{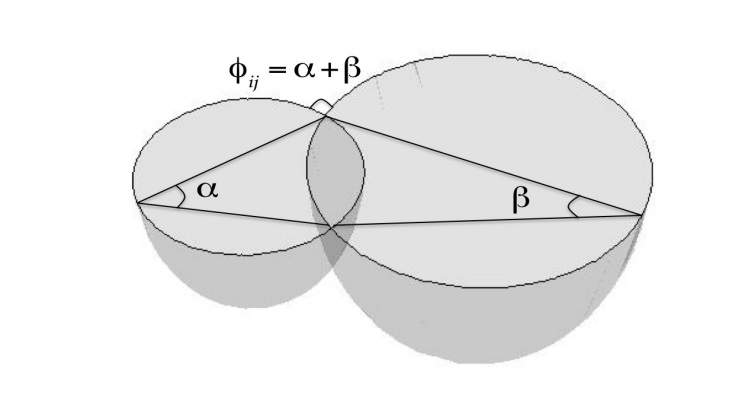}
\caption{Two intersecting hyperbolic planes}\label{fig-g5}
\end{minipage}
\end{figure}

\subsection{Delaunay triangulations and convex hyperbolic polyhedra}
The Delaunay condition ($\alpha_{ij}\leq \pi$ for each interior edge $ij$) can also be rephrased as ``the circumcircle of each triangle does not contain any vertices in its interior" \cite{Rivin-Annals}\cite{Springborn-JDG}. Given a locally finite Delaunay triangulation $T=(V,E,F)$ of $\mathbb{C}$. By definition, locally finite means that at each vertex $i\in V$, there is only finite vertices adjacent to $i$. We erase all such edge $ij$ with $\alpha_{ij}=\pi$, and obtain a reduced Delaunay decomposition $T^{red}=(V,E^{red},F^{red})$ of $\mathbb{C}$. Note that $E^{red}$ is a subset of $E$. Moreover, a face $\tau$ in the reduced decomposition $T^{red}$ may not be a triangle again. However, $\tau$ is always a finite convex polygon inscribed in a circle, which is denoted by $C_{\tau}$. Recall $\mathbb{C}$ is considered as the sphere at the infinity of the hyperbolic $3$-space $\mathbb{H}^3=\mathbb{C}\times\mathbb{R}_{>0}$. Thus $C_{\tau}$ is the boundary of a hyperbolic plane $C_{\tau}^*$ in $\mathbb{H}^3$, or say, $C_{\tau}$ is the intersection at infinity between $C_{\tau}^*$ and $\partial\mathbb{H}^3$. Geometrically, $C_{\tau}^*$ is the convex hull spanned by $C_{\tau}$ in $\mathbb{H}^3$. Obviously, the half sphere $C_{\tau}^*$ divide $\mathbb{H}^3$ into two part. Denote $C_{\tau}^*(-)$ by the open set in $\mathbb{H}^3$ below the half sphere $C_{\tau}^*$ and above the plane $\mathbb{C}$. See Figure \ref{fig-g6}. Then we obtain an ideal hyperbolic polyhedra with infinite vertices
$$\mathcal{P}(T,l)=\bigcap_{\tau\in F^{red}}\mathbb{H}^3\setminus C_{\tau}^*(-).$$
$\mathcal{P}(T,l)$ is convex, since $T$ is Delaunay. By definition, the dihedral angle of $\mathcal{P}(T,l)$ at an edge $ij\in E^{red}$ is the intersection angle between the two half spheres $C_{ijk}^*$ and $C_{ijl}^*$ (we assume that the two triangles $ijk$ and $ijl$ have a common edge $ij$, and are embedded in $\mathbb{C}$), which equals to the intersection angle $\Phi_{ij}$ between the two circles $C_{ijk}$ and $C_{ijl}$. By elementary arguments (or see \cite{Rivin}), one obtain $\Phi_{ij}=\alpha_{ij}$. See Figure \ref{fig-g5}. Thus the Delaunay condition $\alpha_{ij}\leq\pi$ says that all dihedral angles of $\mathcal{P}(T,l)$ are no more than $\pi$, which implies that $\mathcal{P}(T,l)$ is convex.

\subsection{A hyperbolic geometry interpretation of Theorem \ref{main}}
Let $(T_{hex}, l)$ be the standard hexagonal triangulation on $\mathbb{C}$ equipped with a PL-metric $l$. We assume that $(T_{hex}, l)$ is flat, complete and Delaunay. Recall $\mathcal{P}(T_{hex},l)$ is the corresponding ideal hyperbolic polyhedron constructed in the previous section. Its boundary $\partial\mathcal{P}(T_{hex},l)$ is a hyperbolic surface with infinite cusps $V$. By Theorem \ref{thm-BPS}, two such PL metrics $\tilde{l}$ and $l$ are PL conformal if and only if the hyperbolic surfaces $\partial\mathcal{P}(T_{hex},\tilde{l})$ and $\partial\mathcal{P}(T_{hex},l)$ are isometric. Thus Theorem \ref{main} may be rephrased as
\begin{theorem}
If the hyperbolic surface $\partial\mathcal{P}(T_{hex},l)$ with cusps is isometric to $\partial\mathcal{P}(T_{hex},l_0)$, where $l$ is flat, complete and Delaunay. Then the ideal polyhedron $\mathcal{P}(T_{hex},l)$ is isometric to $\mathcal{P}(T_{hex},l_0)$.
\end{theorem}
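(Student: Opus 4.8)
The plan is to obtain this statement by chaining together the rigidity Theorem~\ref{main} with the dictionary between PL conformality and hyperbolic isometry, namely Theorem~\ref{thm-BPS} and the construction of $\mathcal{P}(T_{hex},l)$ described above. The hypothesis is that the cusped hyperbolic surfaces $\partial\mathcal{P}(T_{hex},l)$ and $\partial\mathcal{P}(T_{hex},l_0)$ are isometric. Since these surfaces carry exactly the complete hyperbolic metrics $l^*$ and $l_0^*$ attached to the two PL metrics on the \emph{same} triangulation $T_{hex}$, the equivalence recorded in Theorem~\ref{thm-BPS} applies verbatim and yields that $l$ and $l_0$ are PL conformal. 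Thus $l=u*l_0$ for some conformal factor $u\colon V\to\mathbb{R}$, and by hypothesis $l$ is flat, complete and Delaunay.

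Next I would simply feed this configuration into Theorem~\ref{main}: a generalized PL metric on the standard hexagonal triangulation which is PL conformal to $l_0\equiv 1$ and is flat, complete and Delaunay must satisfy $l\equiv C$ for some constant $C>0$. Equivalently, the conformal factor $u$ is constant, so that $l=C\,l_0$; all edges of $(T_{hex},l)$ have the common length $C$.

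It then remains to promote the metric equality $l=C\,l_0$ to an isometry of the two ideal polyhedra. Because $(T_{hex},l)$ and $(T_{hex},l_0)$ are both flat and complete, their developing maps realize $V$ as two regular hexagonal lattices in $\mathbb{C}=\partial\mathbb{H}^3$ that differ by a Euclidean similarity $\phi(z)=az+b$ with $|a|=C$. Its Poincar\'e extension $(z,t)\mapsto(az+b,|a|t)$ is an isometry of $\mathbb{H}^3=\mathbb{C}\times\mathbb{R}_{>0}$ carrying the ideal boundary configuration of $l$ onto that of $l_0$. Consequently $\phi$ maps each circumcircle $C_\tau$, hence each supporting half-sphere $C_\tau^*$ and each region $C_\tau^*(-)$, to the corresponding object for $l_0$; intersecting $\bigcap_{\tau}\mathbb{H}^3\setminus C_\tau^*(-)$ shows that $\phi$ restricts to an isometry $\mathcal{P}(T_{hex},l)\to\mathcal{P}(T_{hex},l_0)$, which is the desired conclusion.

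The genuine mathematical weight of this theorem lies entirely in Theorem~\ref{main}; once that rigidity is granted, the argument above is essentially a translation. I expect the only step requiring real care to be the first one: one must check that an abstract isometry of the two cusped surfaces indeed corresponds to PL conformality relative to the fixed combinatorial triangulation $T_{hex}$ (matching cusps with vertices and preserving the relevant shear data), which is precisely the content of Theorem~\ref{thm-BPS}. The final homothety step is geometrically routine, being nothing more than the observation that a Euclidean similarity of the sphere at infinity extends to a hyperbolic isometry of $\mathbb{H}^3$.
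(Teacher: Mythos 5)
Your proposal is correct and follows exactly the route the paper intends: the paper presents this theorem as a direct rephrasing of Theorem~\ref{main} via the dictionary of Theorem~\ref{thm-BPS} (identifying $\partial\mathcal{P}(T_{hex},l)$ with the cusped surface $l^*$, so that the isometry hypothesis is equivalent to PL conformality), after which the rigidity gives $l\equiv C$ and the two polyhedra differ by the Poincar\'e extension of a Euclidean similarity. Your write-up merely makes explicit the final homothety step that the paper leaves implicit.
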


A convex ideal hyperbolic polyhedron $\mathcal{P}$ with infinite but locally finite faces is called \emph{hexagonally triangulated}, if the combinatoric of its boundary is equivalent to some reduced Delaunay decomposition $T_{hex}^{red}$ of $(T_{hex}, l_0)$. In other words, $\mathcal{P}$ is called hexagonally triangulated, if one can further triangulate its boundary (without adding new vertices) so as each vertex have valent six. In this case,
the combinatoric of the further triangulated boundary becomes equivalent to a hexagonal triangulation of $\mathbb{C}$. See Figure \ref{fig-g7}.

\begin{figure}[htbp]\centering
\includegraphics[width=0.5\textwidth]{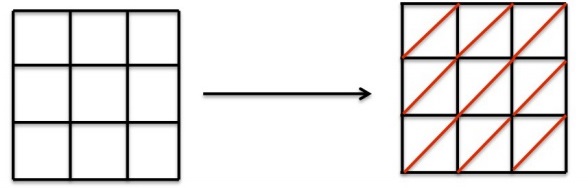}
\caption{Hexagonal triangulation}\label{fig-g7}
\end{figure}
\begin{corollary}
Given an infinite convex ideal hexagonally triangulated polyhedron $\mathcal{P}$ in $\mathbb{H}^3$. If $\partial\mathcal{P}$ is isometric to $\partial\mathcal{P}(T_{hex},l_0)$, then $\mathcal{P}$ is congruent to the standard ideal polyhedron $\mathcal{P}(T_{hex},l)$.
\end{corollary}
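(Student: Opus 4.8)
The plan is to reduce the statement to the hyperbolic reformulation of Theorem \ref{main} proved just above, by reconstructing from the abstract polyhedron $\mathcal{P}$ a genuine PL metric $l$ on the hexagonal triangulation with $\mathcal{P}=\mathcal{P}(T_{hex},l)$. First I would normalize the vertex configuration: after applying an isometry of $\mathbb{H}^3$ I may assume that $\infty$ is an interior point of $\mathcal{P}$ (so that $\mathcal{P}$ opens upward) and that all ideal vertices of $\mathcal{P}$ lie in $\mathbb{C}=\partial_\infty\mathbb{H}^3\setminus\{\infty\}$. Each face of $\mathcal{P}$ is then an ideal hyperbolic polygon inscribed in a circle $C_\tau\subset\mathbb{C}$, and by the hexagonally-triangulated hypothesis its boundary subdivides, without adding vertices, into a combinatorial copy of $T_{hex}$. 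Declaring $l_{ij}=|v_i-v_j|$ to be the Euclidean distance between the ideal endpoints $v_i,v_j\in\mathbb{C}$ of each edge $ij$ produces a PL metric on $T_{hex}$, and the convex-hull construction of Section \ref{hyp} recovers $\mathcal{P}=\mathcal{P}(T_{hex},l)$.

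Second, I would check that $l$ is flat and Delaunay. Flatness is automatic from embeddedness: since $\mathcal{P}$ is convex and opens toward $\infty$, the radial projection $(z,t)\mapsto z$ restricts to a homeomorphism from $\partial\mathcal{P}$ onto its image in $\mathbb{C}$, so the Euclidean triangles incident to a vertex $v_i$ tile a genuine neighborhood of $v_i$ and their angles sum to $2\pi$, i.e. $K_i=0$. The Delaunay condition is exactly the equivalence established in Section \ref{hyp}: the faces of the convex polyhedron $\mathcal{P}$ are the faces of the convex hull of the vertex set, so each circumscribed circle $C_\tau$ contains no vertex in its interior, which is precisely $\alpha_{ij}\le\pi$ at every edge. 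Hence $l$ is a flat Delaunay PL metric with $\mathcal{P}=\mathcal{P}(T_{hex},l)$.

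Third — and this is the step I expect to require the most care — I would establish that $l$ is complete, i.e. that its developing map is a global isometry onto $\mathbb{C}$. The developing map of the flat metric $l$ is a local isometry; it is injective because $\mathcal{P}$ is embedded; and it is proper provided the vertex set $V\subset\mathbb{C}$ is closed and discrete, which I would deduce from $\mathcal{P}$ being an infinite, locally finite polyhedron with $\infty$ placed in its interior. A proper injective local isometry of the simply connected PL plane into $\mathbb{C}$ is a covering map with open and closed image, hence a homeomorphism onto $\mathbb{C}$, so $l$ is complete. The delicate point is to exclude, using the infinite locally finite hexagonal combinatorics together with the isometry $\partial\mathcal{P}\cong\partial\mathcal{P}(T_{hex},l_0)$, the degenerate possibility that the vertices accumulate at a finite point of $\mathbb{C}$ (the embedded analogue of the winding picture in Figure \ref{fig-9}), which is the only way properness could fail.

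Finally, with $l$ flat, complete and Delaunay and $\partial\mathcal{P}(T_{hex},l)=\partial\mathcal{P}$ isometric to $\partial\mathcal{P}(T_{hex},l_0)$, the hyperbolic reformulation of Theorem \ref{main} applies verbatim and yields $\mathcal{P}(T_{hex},l)\cong\mathcal{P}(T_{hex},l_0)$. Unwinding this: by Theorem \ref{thm-BPS} the boundary isometry makes $l=u*l_0$ PL conformal to $l_0$, Theorem \ref{main} forces $u$ to be constant so that $l\equiv C$, and the similarity $z\mapsto Cz$ of $\mathbb{C}$ extends to the hyperbolic isometry $(z,t)\mapsto(Cz,Ct)$ of $\mathbb{H}^3$ carrying $\mathcal{P}$ onto the standard polyhedron $\mathcal{P}(T_{hex},l_0)$. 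This is the claimed congruence.
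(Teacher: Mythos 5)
Your overall route is the intended one: the paper offers no separate proof of this corollary, treating it as an immediate consequence of the hyperbolic reformulation of Theorem \ref{main} stated just before it, and your reconstruction of a PL metric from $\mathcal{P}$ (normalize so that $\mathcal{P}$ contains a horoball at $\infty$, set $l_{ij}=|v_i-v_j|$, get flatness from the vertical-graph property of $\partial\mathcal{P}$, get the Delaunay condition from convexity via the supporting-hemisphere/empty-circumdisk picture of Section \ref{hyp}, then finish with Theorem \ref{thm-BPS}, Theorem \ref{main} and the dilation $(z,t)\mapsto(Cz,Ct)$) is exactly the bridge the paper leaves implicit. The flatness, Delaunay, and final similarity steps of your argument are sound.

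However, your third step, completeness, is a genuine gap, and it is precisely where the corollary has content beyond the theorem it is deduced from. Two problems. First, your deduction that the vertex set is closed and discrete ``because $\mathcal{P}$ is an infinite, locally finite polyhedron with $\infty$ placed in its interior'' is a non sequitur: local finiteness is a combinatorial condition (finitely many faces at each vertex) and places no restriction on where the ideal vertices sit on the sphere at infinity; nothing you say excludes the $v_i$ accumulating at a finite point of $\mathbb{C}$, nor the related degeneration in which infinitely many pairwise disjoint but very thin triangles crowd a point $p\notin V$ (Lemma \ref{ratio} bounds ratios of edge lengths within a triangle, but does not prevent slivers, so even discreteness of $V$ does not obviously yield properness). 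Second, and more fundamentally, no local or combinatorial argument can close this gap: the winding metrics of Figure \ref{fig-9}, with conformal factor the restriction of a linear function of small slope, are flat, Delaunay, and PL conformal to $l_0$ --- so by Theorem \ref{thm-BPS} their associated hyperbolic surface is isometric to $\partial\mathcal{P}(T_{hex},l_0)$ --- and yet they are incomplete. Hence completeness must be extracted from the global embeddedness of $\mathcal{P}$ (which is exactly what the winding examples fail), and your proposal never performs this extraction; it only names it as ``the delicate point.'' Since Theorem \ref{main} takes completeness as a hypothesis, and the whole force of the corollary is that convex embeddedness substitutes for it, as written you have reduced the corollary to an unproved claim of essentially the same difficulty. (A smaller issue, which the paper also glosses over: invoking Theorem \ref{thm-BPS} requires the isometry $\partial\mathcal{P}\cong\partial\mathcal{P}(T_{hex},l_0)$ to be compatible with the two marked ideal triangulations, so that shear coordinates can be compared edge by edge.)
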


We refer to Luo \cite{Luo-book}, Rivin \cite{Rivin} and Springborn \cite{S} for more interpretations.

\noindent Song Dai, song.dai@tju.edu.cn\\[1pt]
\emph{Center for Applied Mathematics, Tianjin University, Tianjin, 300072, P.R. China}\\[1pt]

\noindent Huabin Ge, hbge@ruc.edu.cn\\[1pt]
\emph{School of Mathematics, Renmin University of China, Beijing, 100872, P.R. China}\\[1pt]

\noindent Shiguang Ma, msgdyx8741@nankai.edu.cn\\[1pt]
\emph{Department of Mathematics and LPMC, Nankai University, Tianjin, 300071, P.R. China}
\end{document}